\numberwithin{equation}{section}
\theoremstyle{plain}
\newtheorem{thm}{Theorem}[section]
\newtheorem{lem}[thm]{Lemma}
\newtheorem{rmkk}[thm]{Remark}
\newcommand{\enter}{\bigskip}
\date{}
\begin{document}
\author{{Prasanta Kumar Barik and Ankik Kumar Giri}\\
\footnotesize Department of Mathematics, Indian Institute of Technology Roorkee,\\ \small{ Roorkee-247667, Uttarakhand, India}\\
  }
\title{Global classical solutions to the continuous coagulation equation with collisional breakage}

\maketitle


\begin{quote}
{\small {\em\bf Abstract.} Existence and uniqueness of mass-conserving classical solutions to the continuous coagulation equation with collisional breakage are investigated for an unbounded class of collision kernels and a particular case of distribution function. The distribution function may have a possibility to attain singularity at the origin. The proof of the existence result relies on the compactness method. Moreover, a uniqueness result is shown. In addition, it is observed that the uniqueness solution is mass-conserving. \enter
}
\end{quote}
\noindent
{\bf Keywords:} Collisional breakage; Existence; Uniqueness; Mass conservation; Compactness.\\
{\rm \bf MSC (2010).} Primary: 45K05, 45G99, Secondary: 34K30.\\

\vskip 11pt 

\section{Introduction}
Coagulation and breakage processes occur in the dynamics of particle growth and describe the time evolution of a system of particles under the combined effect of coagulation and breakage. Such processes have a variety of applications in the different field of science, engineering and technology. For instance, in aerosol science (solid or liquid particles suspended in a gas), in astrophysics (formation of stars and planets), polymer science, haematology (red blood cell aggregation) and population dynamics (animal grouping). In these situations, each particle is usually assumed to be fully identified by their size (or volume) which is a nonnegative real number. In this article, we consider a fully nonlinear model which is \emph{continuous coagulation and collisional breakage equation}, see \cite{Safronov:1972, Wilkins:1982, Laurencot:2001}. This model is a partial integro-differential equation which is expressed by unknown quantity $g=g(z, t)$, the concentration of the particle of volume (mass) $z \in [0, \infty)$ at time $t \in[0, \infty)$ as
  \begin{eqnarray}\label{cecbeclassi}
\frac{\partial g}{\partial t}=\mathcal{C}(g)-\mathcal{B}(g)+\mathcal{B}^{*}(g),
\end{eqnarray}
with the initial data
\begin{eqnarray}\label{4in1}
g(z,0) = g_{0}(z)\geq 0,
\end{eqnarray}
  where
\begin{eqnarray*}
\hspace{-5cm} \mathcal{C}(g)(z, t) := \frac{1}{2}\int_0^z E(z-z_1, z_1 ) \mathcal{G}(z-z_1, z_1, t)dz_1,
\end{eqnarray*}
\begin{eqnarray*}
\hspace{-8cm} \mathcal{B}(g)(z, t) : = \int_{0}^{\infty} \mathcal{G}(z, z_1, t)dz_1,
\end{eqnarray*}
and
 \begin{eqnarray*}
\mathcal{B}^*(g) (z, t) :=  \frac{1}{2} \int_{z}^{\infty} \int_{0}^{z_1}P(z|z_1-z_2;z_2) E_1(z_1-z_2, z_2 )  \mathcal{G}(z_1-z_2, z_2, t)dz_2 dz_1,
\end{eqnarray*}
with
\begin{eqnarray*}
\hspace{-7cm} \mathcal{G}(z, z_1, t) :=\Phi(z,z_1)g(z,t)g(z_1,t).
\end{eqnarray*}
We recall that the mechanisms taken into account in this model are the coalescence of two particles to form a larger one and breakup into smaller pieces via binary collision, and that the collision kernel $ 0 \le \Phi(z,z_1) = \Phi(z_1, z),\ (z, z_1) \in [0, \infty) \times [0, \infty)$ models the likelihood that two particles with respective volumes $z$ and $z_1$ merge into a single one $(z + z_1)$ with efficiency of coalescence $E(z, z_1)$. Here, $E_1(z, z_1)$ is the efficiency that the two collide particles are breaking with possible transfer of volume to form two or more particles. In addition, $E(z, z_1)+E_1(z, z_1)=1$ with $0 \le E(z, z_1)=E(z_1, z), E_1(z, z_1)=E_1(z_1, z) \le 1$. Here, the distribution function $P(z|z_1;z_2)$ describes an average number of particles of volume $z$ emerged from the breakage event arising due to the collision between particles of volumes $z_1-z_2$ and $z_2$ with possible transfer of volume. \\

Moreover, the distribution function $P$ enjoys the following properties:
\begin{itemize}
  \item  $ 0 \le P(z|z_1; z_2)=P(z|z_2; z_1)$ (symmetric with respect to $z_1$ and $z_2$).\\
  \item The total number of particles resulting from the collisional breakage process is given by
\begin{align}\label{4Total particles}
\int_{0}^{z_1+z_2}P(z|z_1; z_2)dz = N,\ \text{for all}\  z_1\ge 0,\ z_2\ge 0, \  P(z|z_1;z_2)=0\  \text{for}\  z> z_1+z_2.
\end{align}
  \item  A necessary condition for the mass conservation during the collisional breakage event is
\begin{align}\label{mass1}
\hspace{-5cm}\int_{0}^{z_1+z_2} z P(z|z_1;z_2)dz = z_1+z_2,\ \ \text{for all}\ \ z_1 \ge 0,\ z_2 \ge 0.
\end{align}
 From the condition (\ref{mass1}), the total volume $z_1+z_2$ of particles is conserved during the breakage of a particles of volumes $z_1$ $\&$ $z_2$ due to their collisions.
\end{itemize}

 Throughout the rest of the paper, we assume the following particular distribution function: $P(z|z_1; z_2)=(\theta+2) \frac{z^{\theta}}{(z_1+z_2)^{\theta+1}}$  with $\theta=0$. Hence, we obtain
\begin{align}\label{binarybreakage}
P(z|z_1; z_2)=\frac{2}{(z_1+z_2)}.
 \end{align}
After substituting this distribution function into \eqref{4Total particles}, one can easily obtain the case of binary breakage of particles, i.e. $N=2$.\\

The term $\mathcal{C}(g)$ on the right-hand side of \eqref{cecbeclassi} represents the formation of particles of volumes $z$ due to the occurrence of coagulation events and the second term $\mathcal{B}(g)$ describes the disappearance of particles of volumes $z$ due to both the coagulation and the breakage processes. On the other hand, the term $\mathcal{B}^*(g)$ shows the birth of particles of volumes $z$ due to collision breakage between a pair of particles of volumes $z_1-z_2$ and $z_2$ with possible transfer of masses (volumes).\\

Moreover, throughout this paper, we need to have some information on the time evolution of moments of solutions to \eqref{cecbeclassi}. For a function $g : [0, \infty) \rightarrow \mathds{R} $, we define the following explicit form of moments as
\begin{eqnarray}\label{moment}
\mathcal{M}_r (t) := \int_0^{\infty} z^r g(z, t)dz,\ \ \text{for}\ r \in \mathds{R}.
\end{eqnarray}
 The zeroth $(r =0)$ and first $(r =1)$ moments, $\mathcal{M}_0 (t)$ and $\mathcal{M}_1 (t)$, respectively, represent the total number of particles and the total mass of particles. In coagulation events, the zeroth moment $\mathcal{M}_0 (t)$ is a  decreasing function while in the breakage process, it is an increasing function. However, $\mathcal{M}_1 (t)$ may or may not be constant during the coagulation and collisional breakage processes that depend on the nature of collision rates. Negative moments have been considered in many articles, see \cite{Escobedo:2004, Escobedo:2006, Norris:1999}.\\

Some particular cases of \eqref{cecbeclassi} have been considered by mathematicians and we mention some of them now. Besides the classical coagulation equation which is obtained from \eqref{cecbeclassi} by setting $E(z, z_1)=1$, we can also consider the case where the collision of a $z-$particle and a $z_1-$particle results in either the coalescence of both in a $(z+z_1)$ or in an elastic collision leaving the incoming clusters unchanged.
The system \eqref{cecbeclassi} then reduces to the continuous version of \emph{Smoluchowski coagulation equation} with coagulation coefficients $(E(z, z_1) \Phi(z, z_1))$, see \cite{Barik:2018Mass, Galkin:1986}.\\

The present work mainly deals with the existence and uniqueness of classical solutions to continuous coagulation model with collisional breakage, \eqref{cecbeclassi}--\eqref{4in1}. At the end, it is also observed that the solution satisfies the mass conservation. There are several mathematical results available on the existence and uniqueness of solutions to coagulation and linear breakage equations which are obtained by applying various techniques under different growth conditions on coagulation and breakage kernels, see \cite{Banasiak:2012I, Banasiak:2012II, Banasiak:2011, Barik:2017Anote, Barik:2018Mass, Dubovskii:1996, Giri:2012, Stewart:1989, Stewart:1990}. However, the nonlinear breakage equation has not well known in the mathematics community. In addition, there are a few articles, in which analytical solutions to nonlinear breakage equation have been investigated only for specific collision and breakup kernels, see \cite{Cheng:1988, Cheng:1990,  Kostoglou:2000, Matthieu:2007}. In 1988, the nonlinear breakage model was studied by Cheng and Redner \cite{Cheng:1988}. In \cite{Cheng:1988}, authors have discussed the scaling form of cluster size distribution and asymptotic behaviour of solutions to the continuous nonlinear breakage equation. Moreover, they have shown the basic difference of the scaling solutions to both linear and nonlinear breakage equations by taking some specific homogeneous collision kernel such as $C(az,az_1)=a^{\lambda}C(z, z_1)$ and the homogeneous breakup kernel such as $B(az|az_1;az_2)=a^{-1}B(z|z_1;z_2)$. In 1990, Cheng and  Redner \cite{Cheng:1990} have proposed a specific class of splitting model for the nonlinear breakage equation in which a pair of particles collide with each other. As a result of this collision, both particles are splitting in different ways: $(i)$ in exactly two, $(ii)$ only the large one is splitting or $(iii)$ only the smaller one is splitting. They have also derived asymptotic behaviour of the scaling solution by using homogeneous collision kernel $C(z, z_1)=(zz_1)^{\frac{\lambda}{2}}$ and breakup kernel in different splitting model for nonlinear breakage equation. Later, Kostoglou and Karabelas \cite{Kostoglou:2000}, have discussed an analytical and asymptotic information of solution to the nonlinear breakage equation. They have considered different simple homogeneous collision and breakup kernels to transform the nonlinear breakage equation into linear one for discussing the self-similar solutions. Recently, Ernst and Pagonabarra \cite{Matthieu:2007} have inquired some more details about the scaling solutions and occurrence of shattering transition for different breakage models such as symmetric breakage, L-breakage and S-breakage of nonlinear breakage equation. Here symmetric breakage, L-breakage and S-breakage denote, respectively, the splitting of both particles into exactly two pieces, splitting of the large particle only and the smaller particle only, see \cite{Cheng:1990}. However, it is quite delicate to handle mathematically the continuous nonlinear breakage equation because small sized particles are fragmented into very small-sized to form an infinite number of clusters in a finite time. In order to overcome this problem, we consider a fully nonlinear continuous model known as the \emph{continuous coagulation model with collisional breakage} \eqref{cecbeclassi}. In \cite{Laurencot:2001, Safronov:1972, Wilkins:1982}, authors have discussed the coagulation and collisional breakage equation. In particular, in \cite{Laurencot:2001}, authors have solved the discrete coagulation and collisional breakage equation mathematically by using a weak $L^1$ compactness method. In \cite{Safronov:1972, Wilkins:1982}, the continuous version of coagulation and collisional breakage equation is described. In the present work, we look for global classical solutions to \eqref{cecbeclassi}--\eqref{4in1}. Several researchers have already discussed the existence of global classical solutions to the coagulation and linear breakage equations through various techniques, see \cite{Banasiak:2012I, Banasiak:2012II, Banasiak:2011, Banasiak:2013, Dubovskii:1996, Galkin:1986, Lamb:2004, McLaughlin:1997I, McLaughlin:1997II}. In \cite{Banasiak:2012I, Banasiak:2012II, Banasiak:2011, Banasiak:2013, Lamb:2004, McLaughlin:1997I, McLaughlin:1997II}, authors have discussed the existence of global classical solutions for the coagulation and linear breakage equations by using semigroup technique whereas in \cite{Dubovskii:1996, Galkin:1986}, a different approach, introduced by Galkin and Dubovskii, is used to show the existence of global classical solutions which relies on a compactness argument. In \cite{Galkin:1986}, they have discussed the existence and uniqueness of global solutions to pure coagulation equation by taking an account of unbounded coagulation kernels. Later, in 1996, Dubovskii and Stewart \cite{Dubovskii:1996} have extended this result for the continuous coagulation and linear binary breakage equations with unbounded coagulation and breakage rates. Best to our knowledge, this is the first attempt to address the existence, uniqueness and mass-conservation of classical solutions to \eqref{cecbeclassi}--\eqref{4in1} by using a compactness technique. The main motivation of this work comes from \cite{Galkin:1986, Dubovskii:1996}.\\

The paper is arranged as follows: In Section 2, we state some definitions and hypotheses which are essentially required for upcoming  results in subsequent sections. In Section 3, we show the existence of classical solutions to \eqref{cecbeclassi}--\eqref{4in1} by using a compactness method which has been widely discussed for coagulation equation with linear breakage, see \cite{Galkin:1986, Dubovskii:1996}. The uniqueness result of the existing solutions to \eqref{cecbeclassi}--\eqref{4in1} is studied in the last section. Finally, the mass-conserving property of the solutions is verified  in this section.

\section{Function spaces and Assumptions}
Fix $T>0$ and let us define the following abstract spaces. Let $\Xi$ be the strip defined as
\begin{eqnarray*}
\Xi := \{(z, t): z\in [0, \infty),\   t \in [0, T] \}
\end{eqnarray*}
and $\Xi(0, Z^o; T)$ denotes the following closed rectangle
\begin{eqnarray*}
\Xi(0, Z^o; T) := \{(z, t): z\in [0, Z^o],\ t \in [0, T] \}.
\end{eqnarray*}
 Let $\Lambda_{-\sigma_1, \sigma_2}(T)$ be the space of all continuous functions $g$ with bounded norms defined by
\begin{eqnarray*}
\|g\|_{-\sigma_1, \sigma_2}:=\sup_{0 \leq t \leq T} \int_0^{\infty} (z^{-\sigma_1}+z^{\sigma_2} )|g(z, t)|dz, \  1 < \sigma_2 \le 2 \ \& \ \frac{1}{2} \le \sigma_1 < 1
\end{eqnarray*}
and
\begin{eqnarray*}
\Lambda^{+}_{-\sigma_1, \sigma_2}(T):= \{ g \in \Lambda_{-\sigma_1, \sigma_2 }(T): g \geq 0 \}
\end{eqnarray*}
which is the positive cone of $\Lambda_{-\sigma_1, \sigma_2}(T)$.
\begin{rmkk}
One can see that  $\Lambda_{-\sigma_1, \sigma_2}(T) \subset  \Lambda_{-\sigma_1, 1}(T) \subset  \Lambda_{-\sigma_1, 0}(T)$ \& $\Lambda_{-\sigma_1, \sigma_2}(T) \subset  \Lambda_{0, \sigma_2}(T)$.
\end{rmkk}

In order to show the existence, uniqueness and mass-conserving of classical solutions to \eqref{cecbeclassi}--\eqref{4in1}, we consider the following assumptions on the collision kernel and the distribution function throughout next sections.\\
\textbf{Assumptions}: $(\alpha_1)$ Let $\Phi(z, z_1)$ and $E(z, z_1)$ be non-negative and continuous functions for all $(z, z_1) \in [0, \infty) \times [0, \infty ) $,\\
\\
$(\alpha_2)$ $\Phi(z, z_1)  = c  (z^{\alpha}z_1^{\alpha'}+z^{\alpha'}z_1^{\alpha})$ for all $(z, z_1)\in [0, \infty) \times [0, \infty )$, for a non-negative constant $c$ and $0< \alpha, \alpha' \le 1/2$,\\
\\
$(\alpha_3)$ for all $ r^{\ast} \in [0, 1)$ and there is a positive constant $\zeta(r^{\ast}) >1$ such that
\begin{align*}
\int_{0}^{z_1+z_2} z^{-r^{\ast}} P(z|z_1;z_2)dz \leq \zeta(r^{\ast}) {(z_1+z_2)}^{-r^{\ast}}.
\end{align*}
Note that, for $r^{\ast}=0$ in $(\alpha_3)$, we have $\zeta(r^{\ast}) =2$.
\begin{rmkk}\label{distribution alpha3}
One can see that for each $r > 1$, we have
\begin{align*}
\int_{0}^{z_1+z_2} z^r P(z|z_1; z_2)dz = \frac{2}{(r+1)} {(z_1+z_2)}^r.
\end{align*}
\end{rmkk}
 Now, let us end up this section by stating the following theorem on existence, uniqueness and mass-conservation of solutions to (\ref{cecbeclassi})--(\ref{4in1}).
\begin{thm}\label{ClassicalExistence Thm}
 Assume $(\alpha_1)$--$(\alpha_3)$ and \eqref{binarybreakage} hold. Let the initial value $g_0 \in \Lambda^+_{-\sigma_1, \sigma_2}(0)$. Then \eqref{cecbeclassi}--\eqref{4in1} has a unique mass-conserving solution $g$ in $\Lambda^+_{-\sigma_1, \sigma_2}(T) \cap ( \Lambda^+_{-1/2, -\sigma_1}(T) \cup  \Lambda^+_{1, \sigma_2}(T))$ for additional restrictions on $\sigma_1$ and $\sigma_2$ s.t. $1 < \max \{ 1+\alpha, 1+\alpha' \} \le  \sigma_2 \le 2$ with $ \frac{1}{2} \le \max \{(1-\alpha), (1-\alpha') \}\le \sigma_1 <1$.
\end{thm}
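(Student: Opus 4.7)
The plan is to adapt the Galkin-Dubovskii compactness strategy used in \cite{Galkin:1986, Dubovskii:1996} for linear breakage to the present nonlinear setting. The argument splits into three stages: construct classical solutions to a truncated finite-volume problem, extract a limit by compactness under uniform weighted moment bounds, and then prove uniqueness and mass conservation by moment-based Gronwall arguments.

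First, for each $n \in \mathds{N}$ I would truncate the collision kernel to $\Phi_n := \Phi\,\mathbf{1}_{[0,n]\times[0,n]}$ and restrict the size variable to $[0,n]$. The resulting system is an ODE in $C([0,n])$ with a locally Lipschitz right-hand side, so a standard Picard iteration (arranged to preserve nonnegativity, since the gain terms have nonnegative kernels and the loss term is linear in $g$) yields a local nonnegative classical solution $g_n$. A cheap a priori bound on the first moment (using that the bounded-kernel problem cannot create mass faster than a known rate) rules out finite-time blow-up, so $g_n$ exists on $[0,T]$.

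The crux is to establish uniform-in-$n$ bounds on $\|g_n\|_{-\sigma_1,\sigma_2}$. Using the symmetrised moment identity
\[
\frac{d}{dt}\mathcal{M}_r = \tfrac{1}{2}\iint \Phi\, g\, g_1\Bigl[E\bigl((z+z_1)^r - z^r - z_1^r\bigr) + E_1\Bigl(\!\!\int_0^{z+z_1}\!\! z_2^r P(z_2|z,z_1)\,dz_2 - z^r - z_1^r\Bigr)\Bigr]dz\,dz_1,
\]
I would analyse $r=\sigma_2$ and $r=-\sigma_1$ separately. For $r=\sigma_2>1$ the inequality $(z+z_1)^{\sigma_2} - z^{\sigma_2} - z_1^{\sigma_2} \le c_{\sigma_2}(z^{\sigma_2-1}z_1 + z z_1^{\sigma_2-1})$ controls the coagulation contribution, Remark~\ref{distribution alpha3} handles the breakage gain, and the growth bound $(\alpha_2)$ reduces everything to moments of order $\sigma_2+\alpha$ and $\sigma_2+\alpha'$; the hypothesis $\sigma_2 \ge 1+\max\{\alpha,\alpha'\}$ is exactly what lets H\"older interpolate these between $\mathcal{M}_1$ and $\mathcal{M}_{\sigma_2}$. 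For $r=-\sigma_1$ the coagulation contribution is nonpositive (since $(z+z_1)^{-\sigma_1} \le \min\{z^{-\sigma_1}, z_1^{-\sigma_1}\}$), so the only threat comes from $\mathcal{B}^*$; assumption $(\alpha_3)$ bounds its contribution by $\zeta(\sigma_1)(z+z_1)^{-\sigma_1}$, and with $\sigma_1 \ge \max\{1-\alpha,1-\alpha'\}$ the right-hand side again closes in $\mathcal{M}_{-\sigma_1}$ and $\mathcal{M}_1$. A coupled Gronwall argument then gives $\sup_n \|g_n\|_{-\sigma_1,\sigma_2} \le C(T)$. I expect this closure of the negative-moment estimate to be the main technical obstacle, because this is where the singularity produced by $P$ near the origin must be absorbed through precisely the interpolation window dictated by the constraints on $\sigma_1$.

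Given these uniform bounds, I would prove equicontinuity of $\{g_n\}$ in $t$ on each rectangle $\Xi(0,Z^o;T)$ by estimating $|g_n(z,t) - g_n(z,s)|$ directly from the integrated form of \eqref{cecbeclassi}, together with a local uniform $L^\infty$ bound coming from a super-solution comparison; an Arzela-Ascoli diagonal extraction then produces a subsequence converging uniformly on compact subsets of $\Xi$ to some $g \ge 0$. Fatou places $g$ in $\Lambda^+_{-\sigma_1,\sigma_2}(T)$, weighted dominated convergence lets me pass to the limit inside the integrals defining $\mathcal{C}(g)$, $\mathcal{B}(g)$ and $\mathcal{B}^*(g)$, and continuity of $\Phi$ upgrades the mild solution to a classical one. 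Uniqueness follows by subtracting two solutions in the asserted class, computing $\frac{d}{dt}\|g-\tilde g\|_{-\sigma_1,\sigma_2}$ with the symmetry of $\Phi$ and $P$, and closing through Gronwall using the moment bounds already in hand. Finally, mass conservation is obtained by testing \eqref{cecbeclassi} against $z\,\mathbf{1}_{[0,R]}$: identity \eqref{mass1} cancels the coagulation and $\mathcal{B}^*$ contributions exactly at the level of the first moment, and the tail decay from $\mathcal{M}_{\sigma_2}<\infty$ (with $\sigma_2>1$) justifies sending $R\to\infty$ to conclude $\mathcal{M}_1(t) \equiv \mathcal{M}_1(0)$.
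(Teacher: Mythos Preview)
Your outline matches the paper's Galkin--Dubovskii compactness scheme in every major respect: truncation of $\Phi$, sequential moment bounds closed via $(\alpha_2)$ and $(\alpha_3)$, a pointwise $L^\infty$ super-solution bound, Arzel\`a--Ascoli extraction, passage to the limit in the integral equation, and Gronwall-based uniqueness and mass conservation.

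There is, however, one genuine gap. You establish equicontinuity of $\{g^n\}$ only in the \emph{time} variable and then invoke Arzel\`a--Ascoli to obtain uniform convergence on compact rectangles $\Xi(0,Z^o;T)$. Since these rectangles are two-dimensional, Arzel\`a--Ascoli requires equicontinuity in the \emph{space} variable as well, and the paper devotes a separate lemma to precisely this. That lemma is not a formality: because the gain term $\mathcal{B}^*(g)$ involves an integration over $[z,\infty)$ whose lower endpoint moves with $z$, the difference $g^n(z+h,t)-g^n(z,t)$ has to be decomposed into several pieces, tails are controlled through the uniform $\mathcal{M}_2$ bound, the uniform continuity of $\Phi$ and $E$ on compacta is invoked, and the argument is closed by a Gronwall inequality on the modulus of continuity $\omega_n(t)=\sup_z|g^n(z+h,t)-g^n(z,t)|$. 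A secondary remark: the paper runs the uniqueness argument with the weight $z^{-\sigma_1}+z$ rather than your proposed $z^{-\sigma_1}+z^{\sigma_2}$, because at the linear weight the explicit breakage contribution $\tfrac{2}{z_1+z_2}\int_0^{z_1+z_2}z\,\mathrm{sgn}(H)\,dz$ is bounded by $z_1+z_2$ and the resulting expression closes directly in $Q(t)=\int (z^{-\sigma_1}+z)|H|\,dz$, avoiding the extra interpolation your choice would require.
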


\newpage
\section{Existence of classical solutions}\label{Existenceclassi}

\subsection{Approximated Solutions}
In this section, we first construct a sequence of continuous kernels $(\Phi_n)_{n=1}^{\infty} $ with compact support for each $n \geq 1$, such that
\begin{equation}\label{collkernel}
\Phi_n(z, z_1)= \chi_{(0, n)}(z) \chi_{(0, n)}(z_1) \Phi(z, z_1).
\end{equation}
Using \eqref{collkernel}, we truncate \eqref{cecbeclassi}--\eqref{4in1} as
\begin{align}\label{4trun cccecf}
\frac{\partial g^n(z,t)}{\partial t}  = &\frac{1}{2} \int_{0}^{z}  E(z-z_1, z_1)G^n(z-z_1,z_1, t)dz_1 
- \int_{0}^{n}G^n(z, z_1, t)dz_1\nonumber\\ 
 & +\frac{1}{2} \int_{z}^{n}\int_{0}^{z_1} P(z|z_1-z_2; z_2) E_1(z_1-z_2, z_2) G^n(z_1-z_2, z_2, t)dz_2 dz_1,
\end{align}
 with initial datum
\begin{equation}\label{4trunc ccecfin1}
g^{n}_0(z)=\chi_{(0, n)}(z) g_0(z),
\end{equation}
where $G^n(z,z_1, t):=\Phi_n(z, z_1)g^n(z, t)g^n(z_1, t) $. For each $n\in \mathds{N}$, we may follow as in \cite[Theorem 3.1]{Stewart:1989} or \cite{Walker:2002} by applying classical fixed point theorem to show the approximated equations (\ref{4trun cccecf})--(\ref{4trunc ccecfin1}) has a unique non-negative solution $\tilde{g}^n$. This family of solutions $(\tilde{g}^n)_{n=1}^{\infty}$ belongs to space $ \mathcal{C}'([0, T];L^1(0, n))$ . Additionally, it satisfies (alternative proof is given in the next section)
\begin{align}\label{4trunc mass1}
\int_{0}^{n} z \tilde{g}^n(z, t)dz \leq  \int_{0}^{n} z\tilde{g}^n_0(z)dz\ \ \ \forall t\in (0, T].
\end{align}
 Next, we enlarge the domain of the approximated solution $\tilde{g}^n$ by its zero extension in $[0, \infty)$ as
\begin{equation}\label{4trunc soln}
\tilde{g}^{n}(z, t):=\begin{cases}
g^n(z, t),\ \ & \text{if}\ 0 \le z < n, \\
\text{0},\ \ &  \ \ \ \ \text{otherwise},
\end{cases}
\end{equation}
for $n \in \mathbb{N}$. For conveniently, we drop the notation ($\tilde{\cdot}$) throughout the paper.

\subsection{Uniformly boundedness of approximated moments }
Here, we prove the following lemma, which is required for the subsequent results.
\begin{lem}\label{Classicaluniboundlemma1}
 Let $T \in (0, \infty)$ and $g_0 \in  \Lambda^+_{-\sigma_1, \sigma_2}(0)$. Assume $g^n$ be the solution to (\ref{4trun cccecf})--(\ref{4trunc ccecfin1}). Then, for each $r \in \mathbb{R}$ with $ r \in [-\sigma_1, \sigma_2] $,
\begin{eqnarray}\label{momentfinite}
\mathcal{M}_{r, n}(g^n)(t):= \int_0^{\infty} z^r g^n(z, t)dz =\int_{0}^{n} z^r g^n(z, t)dz \leq \Gamma_r(T),
\end{eqnarray}
where $1 < \max \{ 1+\alpha, 1+\alpha' \} \le  \sigma_2 \le 2$ with $1/2 \le \max \{(1-\alpha), (1-\alpha') \} \le \sigma_1$, for $r=0$, $\Gamma_0(T)=\Gamma_0$ and $r=1$, $\Gamma_1(T)=\Gamma_1$.
\end{lem}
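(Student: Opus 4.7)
The plan is to derive a differential inequality for $\mathcal{M}_{r,n}$ at the four key orders $r\in\{-\sigma_1,0,1,\sigma_2\}$ and then obtain the bound for every intermediate $r$ by the H\"older interpolation $\mathcal{M}_{r,n}\le\mathcal{M}_{-\sigma_1,n}^{\theta_1}\mathcal{M}_{0,n}^{\theta_2}\mathcal{M}_{\sigma_2,n}^{\theta_3}$ for a suitable convex combination depending on $r$, so the lemma reduces to four Gronwall-type arguments.

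Multiplying \eqref{4trun cccecf} by $z^r$, integrating over $(0,n)$, applying Fubini, and substituting $u=z-z_1$ in the coagulation term and $u=z_1-z_2$ in the breakage-birth term, one evaluates the inner $P$-integral via
\[
\int_0^{z_1}z^r P(z|z_1-z_2;z_2)\,dz=\tfrac{2}{r+1}\,z_1^r\qquad(\text{valid for every }r>-1\text{ by }\eqref{binarybreakage}),
\]
which is compatible with Remark~\ref{distribution alpha3} and with $(\alpha_3)$ (the latter supplying $\zeta(\sigma_1)=2/(1-\sigma_1)$ at $r=-\sigma_1$). The resulting master identity reads
\[
\frac{d\mathcal{M}_{r,n}}{dt}=\frac{1}{2}\iint_{u+z_1\le n}(u+z_1)^{r}\Bigl(E+\tfrac{2E_1}{r+1}\Bigr)\Phi_n g^n g^n\,du\,dz_1-\frac{1}{2}\iint(z^r+z_1^r)\Phi_n g^n g^n\,dz\,dz_1.
\]
For $r=1$ the bracket equals $E+E_1=1$ and the two integrals differ only by the truncation constraint, giving $\tfrac{d}{dt}\mathcal{M}_{1,n}\le 0$ and $\Gamma_1=\mathcal{M}_1(0)$. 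For $r=0$ the identity $\tfrac{1}{2}(E+2E_1)-1=-E/2\le 0$ yields $\tfrac{d}{dt}\mathcal{M}_{0,n}\le 0$ and $\Gamma_0=\mathcal{M}_0(0)$.

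For $r=\sigma_2\in(1,2]$ the bracket is bounded by $1$, and combining the elementary estimate $(u+z_1)^{\sigma_2}-u^{\sigma_2}-z_1^{\sigma_2}\le c_{\sigma_2}(uz_1^{\sigma_2-1}+u^{\sigma_2-1}z_1)$ with $(\alpha_2)$ produces
\[
\frac{d\mathcal{M}_{\sigma_2,n}}{dt}\le C\bigl(\mathcal{M}_{1+\alpha,n}\mathcal{M}_{\sigma_2-1+\alpha',n}+\mathcal{M}_{1+\alpha',n}\mathcal{M}_{\sigma_2-1+\alpha,n}\bigr).
\]
Interpolating each factor either between $\Gamma_1$ and $\mathcal{M}_{\sigma_2,n}$ (when its exponent lies in $[1,\sigma_2]$) or between $\Gamma_0$ and $\Gamma_1$ (when it lies in $[0,1]$) compresses this to $C(1+\mathcal{M}_{\sigma_2,n}^{\mu})$ with $\mu=(\sigma_2-2+\alpha+\alpha')/(\sigma_2-1)\le 1$ thanks to $\alpha+\alpha'\le 1$, and Gronwall's lemma supplies $\mathcal{M}_{\sigma_2,n}(t)\le\Gamma_{\sigma_2}(T)$. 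The case $r=-\sigma_1\in(-1,-1/2]$ is parallel: the convexity of $x\mapsto x^{-\sigma_1}$ gives $(u+z_1)^{-\sigma_1}\le\tfrac12(u^{-\sigma_1}+z_1^{-\sigma_1})$ and one has $E+\tfrac{2E_1}{1-\sigma_1}\le\tfrac{2}{1-\sigma_1}$, so that
\[
\frac{d\mathcal{M}_{-\sigma_1,n}}{dt}\le C\bigl(\mathcal{M}_{\alpha-\sigma_1,n}\mathcal{M}_{\alpha',n}+\mathcal{M}_{\alpha'-\sigma_1,n}\mathcal{M}_{\alpha,n}\bigr).
\]
Since $\alpha,\alpha'\le 1/2\le\sigma_1$, the exponents $\alpha-\sigma_1,\alpha'-\sigma_1\in[-\sigma_1,0]$, so interpolation yields $\mathcal{M}_{\alpha-\sigma_1,n}\le\mathcal{M}_{-\sigma_1,n}^{1-\alpha/\sigma_1}\mathcal{M}_{0,n}^{\alpha/\sigma_1}$, while $\mathcal{M}_{\alpha,n}$ and $\mathcal{M}_{\alpha',n}$ are bounded by interpolation between $\Gamma_0$ and $\Gamma_1$; the resulting sub-linear Gronwall inequality (exponent strictly below $1$ because $\alpha,\alpha'>0$) furnishes $\mathcal{M}_{-\sigma_1,n}(t)\le\Gamma_{-\sigma_1}(T)$.

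The principal obstacle is arranging every Bernoulli-type exponent to stay at most $1$; this is exactly where the hypotheses $\sigma_2\ge 1+\max(\alpha,\alpha')$, $\sigma_1\ge 1-\min(\alpha,\alpha')$, and $\alpha+\alpha'\le 1$ all enter simultaneously. A subsidiary delicate point is keeping the factor $\zeta(\sigma_1)=2/(1-\sigma_1)$ finite, which is guaranteed by the strict inequality $\sigma_1<1$ built into the space $\Lambda_{-\sigma_1,\sigma_2}$.
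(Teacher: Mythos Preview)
Your argument is correct and arrives at the same bounds, but the route differs from the paper's in two respects worth noting.

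First, where you control the products of moments by H\"older interpolation between $\mathcal{M}_{-\sigma_1,n}$, $\mathcal{M}_{0,n}$, $\mathcal{M}_{1,n}$ and $\mathcal{M}_{\sigma_2,n}$, the paper instead splits each $z$-integral at $1$ and uses the crude pointwise bounds $z^{\beta}\le 1$ on $(0,1)$ and $z^{\beta}\le z^{\gamma}$ on $(1,n)$ for $\beta\le\gamma$. This domain-splitting produces a \emph{linear} Gronwall inequality directly (constants involving $\Gamma_0,\Gamma_1$), whereas your interpolation gives a genuinely sublinear one in the $-\sigma_1$ case and an at-most-linear one for $\sigma_2$. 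Your approach is more systematic and makes the role of the exponent constraints $\sigma_2\ge 1+\max(\alpha,\alpha')$, $\sigma_1\ge 1-\min(\alpha,\alpha')$, $\alpha+\alpha'\le 1$ transparent; the paper's splitting is more elementary and avoids tracking the interpolation exponents.

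Second, the paper does not treat $r=\sigma_2$ directly: it proves the bound only for $r=2$ (using $(u+z_1)^2-u^2-z_1^2=2uz_1$ exactly, with no $c_{\sigma_2}$ constant needed) and then recovers all intermediate $r\in[-\sigma_1,2]$ by the same $[0,1]\cup[1,n]$ splitting. Your direct treatment of $r=\sigma_2$ is slightly sharper but requires the additional inequality $(u+z_1)^{\sigma_2}-u^{\sigma_2}-z_1^{\sigma_2}\le c_{\sigma_2}(uz_1^{\sigma_2-1}+u^{\sigma_2-1}z_1)$. Also, for $r=-\sigma_1$ the paper exploits $(u+z_1)^{-\sigma_1}-u^{-\sigma_1}-z_1^{-\sigma_1}\le 0$ to discard the coagulation contribution entirely and bounds only the breakage term via $(\alpha_3)$; you instead bound the full positive part using the convexity estimate $(u+z_1)^{-\sigma_1}\le\tfrac12(u^{-\sigma_1}+z_1^{-\sigma_1})$, which is a little coarser but still sufficient.
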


\begin{proof}

Let $r=1$. Then by the direct integration of (\ref{4trun cccecf}) by using (\ref{collkernel}) and (\ref{4trunc soln}) with respect to volume variable $z$ from $0$ to $n$ after multiplying with the weight $z$, we get
\begin{align}\label{4unibound1}
\frac{d}{dt} \mathcal{M}_{1, n}(g^n)&(t) =   \frac{1}{2} \int_0^{n} \int_{0}^{z} z E(z-z_1, z_1) G^n(z-z_1, z_1, t) dz_1 dz-\int_0^{n} \int_{0}^{n} z G^n(z, z_1, t) dz_1dz \nonumber\\
 & +\frac{1}{2}\int_0^{n} \int_{z}^{n}\int_{0}^{z_1} z P(z|z_1-z_2; z_2) E_1(z_1-z_2, z_2) G^n(z_1-z_2, z_2, t)dz_2dz_1 dz.
\end{align}
Applying Fubini's theorem to the first and the third integrals on the right-hand side to (\ref{4unibound1}) and using the transformation $z-z_1=z'$ $\&$ $z_1=z_1'$, we have
\begin{align}\label{4unibound2}
\frac{d}{dt} \mathcal{M}_{1, n}& (g^n)(t)=   \frac{1}{2} \int_0^{n} \int_{0}^{n-z} (z+z_1) E(z, z_1) G^n(z, z_1, t) dz_1 dz -\int_0^{n} \int_{0}^{n} z G^n(z, z_1, t)dz_1dz \nonumber\\
 & +\frac{1}{2}\int_0^{n} \int_{0}^{z_1}\int_{0}^{z_1}z P(z|z_1-z_2; z_2) E_1(z_1-z_2, z_2) G^n(z_1-z_2, z_2, t)dz dz_2 dz_1.
\end{align}

Using (\ref{mass1}), Fubini's theorem, replacing $z_1$ by $z$ $\&$ $z_2$ by $z_1$, the transformation $z-z_1=z'$ $\&$ $z_1=z_1'$ and the symmetry of $\Phi_n$ on the right-hand side to (\ref{4unibound2}), we obtain
\begin{align}\label{4unibound3}
\frac{d}{dt} \mathcal{M}_{1, n}(g^n)(t)=   &\frac{1}{2} \int_0^{n} \int_{0}^{n-z} (z+z_1) E(z, z_1) G^n(z, z_1, t) dz_1 dz -\int_0^{n} \int_{0}^{n} z G^n(z, z_1, t) dz_1dz \nonumber\\
 & +\frac{1}{2}\int_0^{n} \int_{0}^{n-z} (z+z_1) E_1(z, z_1) G^n(z, z_1, t)dz_1 dz\nonumber\\
 \leq & -\int_0^{n} \int_{n-z}^{n} z G^n(z, z_1, t)dz_1dz \leq 0.
\end{align}
 Integrating \eqref{4unibound3} with respect to time from $0$ to $t$ and using $g_0 \in \Lambda^{+}_{-\sigma_1, \sigma_2}(0)$, we obtain $\mathcal{M}_{1,n}(g^n)(t) \le \Gamma_1$, where $\Gamma_1$ is a constant .\\

 Next, we estimate the zeroth moment of $g^n$, by integrating (\ref{4trun cccecf}) with respect to $z$ from $0$ to $n$ and using (\ref{4trunc soln}), we have
\begin{align}\label{4unibound4}
\frac{d}{dt} \mathcal{M}_{0, n}(g^n)&(t) = \frac{1}{2} \int_0^{n} \int_{0}^{z}  E(z-z_1, z_1) G^n(z-z_1, z_1, t) dz_1 dz-\int_0^{n} \int_{0}^{n}  G^n(z, z_1, t)dz_1dz \nonumber\\
 & +\frac{1}{2}\int_0^{n} \int_{z}^{n}\int_{0}^{z_1} P(z|z_1-z_2; z_2) E_1(z_1-z_2, z_2) G^n(z_1-z_2, z_2, t)dz_2dz_1 dz.
\end{align}

Applying Fubini's theorem to the first and third integrals on the right-hand side to (\ref{4unibound4}), then using \eqref{binarybreakage} and the transformation $z-z_1=z'$ and $z_1=z_1'$, we have
\begin{align}\label{4unibound5}
\frac{d}{dt} \mathcal{M}_{0, n}(g^n)(t) \leq   &\frac{1}{2} \int_0^{n} \int_{0}^{n} E(z, z_1) G^n(z, z_1, t) dz_1 dz-\int_0^{n} \int_{0}^{n} G^n(z, z_1, t)dz_1dz \nonumber\\
 & + \int_0^{n} \int_{0}^{n} E_1(z, z_1)  G^n(z, z_1, t)dz_1 dz\nonumber\\
 \le   &-\frac{1}{2} \int_0^{n} \int_{0}^{n} E(z, z_1) G^n(z, z_1, t) dz_1 dz-\int_0^{n} \int_{0}^{n} E_1(z, z_1) G^n(z, z_1, t)dz_1dz \nonumber\\
 & + \int_0^{n} \int_{0}^{n} E_1(z, z_1)  G^n(z, z_1, t)dz_1 dz \le 0.
\end{align}

Then, using \eqref{4trunc ccecfin1} and $g_0 \in \Lambda^{+}_{-\sigma_1, \sigma_2}(0)$, we estimate
\begin{align*}
\mathcal{M}_{0, n}(g^n)(t) \le    \mathcal{M}_{0,n}(g^n)(0) =: \Gamma_0,
\end{align*}
where $\Gamma_0$ is a constant. Next, we first multiply $z^2$ to \eqref{4trun cccecf} and taking integration from $0$ to $n$ with respect to $z$. Then, using Fubini's theorem, the transformation $z-z_1=z'$ $\&$ $z_1=z_1'$ and Remark \ref{distribution alpha3} for $r=2$, we estimate
\begin{align}\label{4unibound7}
\frac{d}{dt} \mathcal{M}_{2,n}(g^n)(t)= &  \frac{1}{2} \int_0^{n} \int_{0}^{n-z} (z+z_1)^2 E(z, z_1) G^n(z, z_1, t) dz_1 dz\nonumber\\
&-\int_0^{n} \int_{0}^{n} z^2[E(z, z_1)+ E_1(z, z_1)] G^n(z, z_1, t)dz_1dz \nonumber\\
 & +\frac{1}{2} \times \frac{2}{3} \int_0^{n} \int_{0}^{n-z} ( z+z_1)^2  E_1(z, z_1) G^n(z, z_1, t) dz_1 dz\nonumber\\
 \leq &  \int_0^{n} \int_{0}^{n} zz_1 E(z, z_1) G^n(z, z_1, t) dz_1 dz\nonumber\\
 & + \int_0^{n} \int_{0}^{n}  zz_1   E_1(z, z_1) G^n(z, z_1, t)dz_1 dz= \int_0^{n} \int_{0}^{n}  zz_1  G^n(z, z_1, t)dz_1 dz.
\end{align}
 Then, applying $(\alpha_2)$ on the right-hand side to (\ref{4unibound7}) and estimates on $\mathcal{M}_{0, n}(g^n)(t)$ and $\mathcal{M}_{1, n}(g^n)(t)$, we evaluate
\begin{align}\label{4unibound8}
\frac{d}{dt} \mathcal{M}_{2,n}(g^n)(t)
 \le & c \bigg\{\int_0^{1} \int_{0}^{1} +\int_0^{1} \int_{1}^{n}+\int_1^{n} \int_{0}^{1}+\int_1^{n} \int_{1}^{n} \bigg\} z z_1  \nonumber\\
 &~~~~~~\times (z^{\alpha}z_1^{\alpha'}+z^{\alpha'}z_1^{\alpha})  g^n(z, t) g^n(z_1, t)dz_1 dz\nonumber\\
\le & 2c \Gamma_0^2+2c \Gamma_0 \int_{1}^{n}  z_1  ( z_1^{\alpha'}+ z_1^{\alpha}) g^n(z_1, t)dz_1 \nonumber\\
 &+ c \int_1^{n} \int_{1}^{n} z z_1  (z+z_1)  g^n(z, t) g^n(z_1, t)dz_1 dz \nonumber\\
 \le & 2c \Gamma_0^2+ 4c \Gamma_0 \mathcal{M}_{2,n}(g^n)(t) + 2c \Gamma_1 \mathcal{M}_{2,n}(g^n)(t) \nonumber\\
  = & 2c \Gamma_0^2+ 2c (2\Gamma_0  + \Gamma_1 ) \mathcal{M}_{2,n}(g^n)(t).
\end{align}

Then, using $g_0 \in \Lambda^{+}_{-\sigma_1, \sigma_2}(0)$ and applying Gronwall's inequality to (\ref{4unibound8}), we obtain
\begin{align*}
\mathcal{M}_{2,n}(g^n)(t) \leq \Gamma_2(T),
\end{align*}
where $\Gamma_2(T) := (\mathcal{M}_{2,n}(g)(0)+ 2c \Gamma_0^2)e^{2c (2\Gamma_0  + \Gamma_1 )T} $.\\

Now, we evaluate the uniform boundedness of $\mathcal{M}_{-\sigma_1, n}(t)$ for $\sigma_1 \in [1/2, 1)$. For this, we multiply \eqref{4trun cccecf} by $z^{-\sigma_1}$ and integrating with respect to $z$ from $0$ to $\infty$ and using Fubini's theorem, Remark \ref{distribution alpha3}, the transformation $z-z_1=z'$ $\&$ $z_1=z_1'$ and interchanging $z_2$ to $z_1$ $\&$ $z_1$ to $z$ to achieve
\begin{align}\label{unibound11}
\frac{d}{dt}\mathcal{M}_{-\sigma_1, n}(g^n)(t) \le &\frac{1}{2} \int_0^{n} \int_0^{n-z} [ (z+z_1)^{-\sigma_1}-z^{-\sigma_1}-z_1^{-\sigma_1} ]  G^n(z, z_1, t) dz_1 dz \nonumber\\
&- \int_0^{n} \int_{n-z}^{n}  z^{-\sigma_1}  G^n(z, z_1, t) dz_1 dz\nonumber\\
 &+ \frac{\zeta(\sigma_1)}{2} \int_0^{n} \int_0^{n-z} (z+z_1)^{-\sigma_1} G^n(z, z_1, t) dz_1 dz.
\end{align}
We know that $(z+z_1)^{-\omega} \leq z^{-\omega}$ for $\omega \ge 0$ which implies the non-positivity of the first term on the right-hand side to (\ref{unibound11}). The second is also non-positive. Therefore, we evaluate (\ref{unibound11}), by using $(\alpha_2)$ and $\max(1-\alpha, 1-\alpha' ) \le \sigma_1$, as
\begin{align}\label{unibound12}
\frac{d}{dt}\mathcal{M}_{-\sigma_1, n}(g^n)(t) \leq &  \frac{\zeta(\sigma_1)}{2} c \int_0^{n} \int_0^{n} (z+z_1)^{-\sigma_1}  (z^{\alpha}z_1^{\alpha'}+z^{\alpha'}z_1^{\alpha}) g^n(z, t) g^n(z_1, t) dz_1 dz\nonumber\\
& \le \frac{\zeta(\sigma_1)}{2} c \bigg\{  \int_0^{1} \int_0^{1}+\int_0^{1} \int_1^{n}+\int_1^{n} \int_0^{1}+\int_1^{n} \int_1^{n}   \bigg\} (z+z_1)^{-\sigma_1}\nonumber\\
&~~~~~~~~~~~~~~~~~~~~~~~~  (z^{\alpha}z_1^{\alpha'}+z^{\alpha'}z_1^{\alpha}) g^n(z, t) g^n(z_1, t) dz_1 dz\nonumber\\
\leq & \zeta(\sigma_1) c  \bigg[ \mathcal{M}_{-\sigma_1, n}(g^n)(t) \Gamma_0 +  2 \Gamma_0 \Gamma_1+ \Gamma_1^2 \bigg].
\end{align}
Applying Gronwall's inequality into (\ref{unibound12}), we obtain
\begin{align*}
\mathcal{M}_{-\sigma_1, n}(g^n)(t)\leq \Gamma_{-\sigma_1}(T),
\end{align*}
where $\Gamma_{-\sigma_1}(T):=  [\mathcal{M}_{-\sigma_1, n}(g)(0)+   \zeta(\sigma_1)c( 2\Gamma_0 \Gamma_1+ \Gamma_1^2)) e^{( \zeta(\sigma_1) c \Gamma_0  T  )} $. Thanks $g_0 \in \Lambda^{+}_{-\sigma_1, \sigma_2}(0)$. This completes the proof of Lemma \ref{Classicaluniboundlemma1}.
\end{proof}

In order to apply a compactness technique, we need the following lemma. In the next lemma, we show $\{g^n\}_{n \in \mathds{N}}$ is uniformly bounded on each compact set $\Xi (0, Z^o; T)$, for a fixed $T>0$ and $0 \le z \le Z^o$.
\subsection{Uniform boundedness of approximated solutions}
\begin{lem}\label{Uniform bounded of approximated solutions}
 Let $T \in (0, \infty)$ and $g_0 \in  \Lambda^+_{-\sigma_1, \sigma_2}(0)$. Assume $g^n$ be the solution to (\ref{4trun cccecf})--(\ref{4trunc ccecfin1}). Then, we have
\begin{eqnarray*}
g^n(z, t) \le \mathcal{S}(T) \ \ \text{on}\ \ \Xi (0, Z^o; T),
\end{eqnarray*}
 where $T>0$ and $0 \le z \le Z^o$.
\end{lem}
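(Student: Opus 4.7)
The plan is to derive a pointwise differential inequality for $g^n$ on $\Xi(0, Z^o; T)$ and close it via Gronwall's inequality, leaning on the moment bounds from Lemma \ref{Classicaluniboundlemma1} to control the gain terms uniformly in $n$. First I drop the non-positive loss term $-\int_0^n G^n(z, z_1, t)\, dz_1$ from \eqref{4trun cccecf} to obtain $\partial_t g^n(z, t) \le \mathcal{C}^n(z, t) + \mathcal{B}^{*,n}(z, t)$, and introduce $M^n(t) := \sup_{z \in [0, Z^o]} g^n(z, t)$, which is finite for each $t$ since $g^n$ is continuous.

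The coagulation gain is easy: for $z \in [0, Z^o]$, both $z-z_1$ and $z_1$ lie in $[0, Z^o]$, so $(\alpha_2)$ together with $\alpha, \alpha' \in (0, 1/2]$ gives $\Phi(z-z_1, z_1) \le 2c (Z^o)^{\alpha + \alpha'}$. Bounding one factor $g^n(z-z_1, t) \le M^n(t)$ and using $\mathcal{M}_{0,n}(g^n)(t) \le \Gamma_0$ yields $\mathcal{C}^n(z, t) \le c (Z^o)^{\alpha + \alpha'} \Gamma_0\, M^n(t) =: C_1 M^n(t)$.

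The main obstacle is the birth term $\mathcal{B}^{*,n}$: after substituting \eqref{binarybreakage} it carries a factor $1/z_1$ integrated over $z_1 \ge z$, which threatens to blow up as $z \to 0$. To tame it, I apply Fubini and change variables $u = z_1 - z_2,\ v = z_2$ to rewrite
\[
\mathcal{B}^{*,n}(z, t) \le \int_0^n \int_0^n \frac{\Phi(u, v)}{u+v}\, g^n(u, t) g^n(v, t)\, du\, dv.
\]
The key estimate is AM--GM, $u + v \ge 2\sqrt{uv}$, giving
\[
\frac{\Phi(u, v)}{u+v} \le \frac{c}{2}\bigl(u^{\alpha - 1/2} v^{\alpha' - 1/2} + u^{\alpha' - 1/2} v^{\alpha - 1/2}\bigr).
\]
Since the restriction $\max(1-\alpha, 1-\alpha') \le \sigma_1$ implies $\sigma_1 \ge 1/2 - \alpha$ and $\sigma_1 \ge 1/2 - \alpha'$, splitting $[0, \infty) = [0,1] \cup [1, \infty)$ and comparing to the $-\sigma_1$ and $1$ moments gives $\int_0^\infty u^{\alpha - 1/2} g^n(u, t)\, du \le \Gamma_{-\sigma_1}(T) + \Gamma_1$, and similarly with exponent $\alpha' - 1/2$. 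Hence $\mathcal{B}^{*,n}(z, t) \le c\bigl(\Gamma_{-\sigma_1}(T) + \Gamma_1\bigr)^2 =: C_2$, uniformly in $z \in [0, Z^o]$ and in $n$.

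Combining the two bounds, $\partial_t g^n(z, t) \le C_1 M^n(t) + C_2$. Since $g_0$ belongs to the space of continuous functions $\Lambda^{+}_{-\sigma_1, \sigma_2}(0)$ it is bounded on $[0, Z^o]$, and $g^n_0(z) \le g_0(z)$ by \eqref{4trunc ccecfin1}. Integrating in time gives
\[
g^n(z, t) \le \|g_0\|_{L^\infty(0, Z^o)} + C_2 T + C_1 \int_0^t M^n(s)\, ds,\quad z \in [0, Z^o],\ t \in [0, T].
\]
Taking sup over $z \in [0, Z^o]$ and applying Gronwall's inequality delivers the uniform-in-$n$ estimate $M^n(t) \le \mathcal{S}(T) := \bigl(\|g_0\|_{L^\infty(0, Z^o)} + C_2 T\bigr) e^{C_1 T}$, which is the claim.
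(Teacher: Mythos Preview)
Your argument is correct, and it is genuinely simpler than the paper's. The two proofs diverge in how they treat both gain terms and how they close the estimate.

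For the coagulation gain, the paper retains the convolution structure, obtaining
\[
\partial_t g^n(z,t) \le c(1+Z^o)\,(g^n * g^n)(z,t) + C_2,
\]
and then has to compare $g^n$ against a majorant $A(z,t)$ solving a nonlinear integral equation in~$z$; $A$ is found via Laplace transform and the comparison $g^n \le A$ is established by a contradiction argument with an auxiliary $A_\epsilon$. You instead linearize at once by pulling out one factor as $M^n(t)=\sup_{[0,Z^o]}g^n(\cdot,t)$ and integrating the other against $\mathcal M_{0,n}\le\Gamma_0$, landing directly in a scalar Gronwall inequality for $M^n$. This avoids the Laplace machinery and the $A_\epsilon$ argument entirely.

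For the birth term, the paper uses the asymmetric bound $\frac{1}{u+v}\le \frac{1}{u}$, producing factors $u^{\alpha-1}$ and hence invoking the hypothesis $\sigma_1\ge 1-\alpha$ to control the negative moment. Your AM--GM bound $\frac{1}{u+v}\le\frac{1}{2\sqrt{uv}}$ gives symmetric factors $u^{\alpha-1/2}$, which only needs $\sigma_1\ge 1/2-\alpha$; since $\sigma_1\ge 1/2$ this is automatic, so you do not actually need the full strength of $\max(1-\alpha,1-\alpha')\le\sigma_1$ here (your remark that the hypothesis implies it is correct but superfluous).

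What the paper's route buys is an explicit $z$-dependent majorant $A(z,t)$, but that refinement is not used downstream. What your route buys is brevity and an entirely elementary closure. The only implicit ingredient you rely on is that $M^n$ is finite and continuous in~$t$ so that Gronwall applies; this follows from the continuity of $g^n$ on $\Xi(0,Z^o;T)$, which the paper also uses when it writes $\tilde g(0)=\sup_{0\le z\le Z^o} g_0^n(z)$.
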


\begin{proof}
Since the second term on right-hand side of (\ref{4trun cccecf}) is non-positive, thus, from (\ref{4trun cccecf}), we have
\begin{align}\label{uniform1}
\frac {\partial g^n(z, t)}{\partial t} 
 \leq &\frac{1}{2}\int_{0}^{z} G^n(z-z_1, z_1, t)dz_1\nonumber\\   
  & +\frac{1}{2} \int_{z}^{n}\int_{0}^{z_1} P(z|z_1-z_2;z_2) G^n(z_1-z_2, z_2, t)dz_2 dz_1.   
\end{align}
Using $(\alpha_2)$ and $\eqref{binarybreakage}$ into (\ref{uniform1}), we estimate
\begin{align}\label{uniform2}
\frac {\partial g^n(z, t)} {\partial t} \leq &\frac{1}{2} c \int_{0}^{z} [ (z-z_1)^{\alpha}z_1^{\alpha'}+(z-z_1)^{\alpha'}z_1^{\alpha}]  g^n(z-z_1, t)g^n(z_1, t)dz_1 \nonumber\\
&+ c  \int_{z}^{n}\int_{0}^{z_1} \frac{1}{z_1} [ (z_1-z_2)^{\alpha}z_2^{\alpha'}+ (z_1-z_2)^{\alpha'}z_2^{\alpha} ]  g^n(z_1-z_2, t)g^n(z_2, t)dz_2 dz_1.
\end{align}
Applying Fubini's theorem to the second integral on the right-hand side to \eqref{uniform2} and using the transformation $z_1-z_2=z_1'$ $\&$ $z_2=z_2'$, we obtain
\begin{align}\label{uniform3}
\frac {\partial g^n(z, t)} {\partial t} \le & c (1+Z^o) \int_{0}^{z}   g^n(z-z_1, t)g^n(z_1, t)dz_1 \nonumber\\
&+  c   \int_{0}^{z}\int_{z-z_2}^{n-z_2} \frac{1}{(z_1+z_2)} ( z_1^{\alpha} z_2^{\alpha'}+ z_1^{\alpha'} z_2^{\alpha} ) g^n(z_1, t)g^n(z_2, t)dz_1 dz_2 \nonumber\\
&+ c   \int_{z}^{n}\int_{0}^{n-z_2} \frac{1}{(z_1+z_2)} ( z_1^{\alpha}z_2^{\alpha'}+ z_1^{\alpha'}z_2^{\alpha} )  g^n(z_1, t)g^n(z_2, t)dz_1 dz_2\nonumber\\
\le &c (1+Z^o) \int_{0}^{z}   g^n(z-z_1, t)g^n(z_1, t)dz_1 \nonumber\\
&+ c   \int_{0}^{n}\int_{0}^{n} ( z_1^{\alpha-1}z_2^{\alpha'}+ z_1^{\alpha'}z_2^{\alpha-1} )  g^n(z_1, t)g^n(z_2, t)dz_1 dz_2\nonumber\\
\le & c (1+Z^o) \int_{0}^{z}   g^n(z-z_1, t)g^n(z_1, t)dz_1 \nonumber\\
&+ 2c   \int_{0}^{n}\int_{0}^{n}  z_1^{\alpha-1}z_2^{\alpha'}   g^n(z_1, t)g^n(z_2, t)dz_1 dz_2.
\end{align}
An application of Lemma \ref{Classicaluniboundlemma1} and $1-\alpha \le \sigma_1$, we estimate \eqref{uniform3} as
\begin{align}\label{uniform4}
\frac {\partial g^n(z, t)} {\partial t} \leq  c (1+Z^o)  (g^n \ast g^n) (z, t) +   2c  ( \Gamma_0+ \Gamma_1) (\Gamma_{-\sigma_1}(T)+ \Gamma_1),
\end{align}
where  $(g^n \ast g^n)(z, t)$ is the convolution between $g^n$ with itself. Next, taking integration of \eqref{uniform4} with respect to time variable from $0$ to $t$, we have
\begin{align}\label{uniform5}
 g^n(z, t)  \le \tilde{g} (0)  +  \int_0^t  \bigg[ c (1+Z^o)    g^n \ast g^n(z, s) +  2c  ( \Gamma_0+ \Gamma_1) (\Gamma_{-\sigma_1}+ \Gamma_1) \bigg] ds,
\end{align}
where $\tilde{g}(0)=\sup_{0 \leq z \leq Z^o} g^n_0(z)$.
 Let us now define a function to control the right-hand side of the integral inequality (\ref{uniform5}) as
\begin{eqnarray}\label{uniform6}
A(z, t) :=  A(0) +  \int_0^t \bigg[ c (1+Z^o) (A \ast A)(z, s) + A(z, s) \bigg] ds, \ \ 0  \le z \le Z^o,\ 0 \le t \le T,
\end{eqnarray}

where $A(0)= \max\{ \tilde{g}(0),  2c  ( \Gamma_0+ \Gamma_1) (\Gamma_{-\sigma_1}(T)+ \Gamma_1) \}$ is a positive constant. Then, applying Laplace transform with respect to space variable $z$, using Leibniz's rule and then later its inverse Laplace transformation to (\ref{uniform6}), we have
\begin{eqnarray}\label{uniform7}
A(z, t) =  A(0) \exp \bigg(  c A(0) z (1+Z^o) (e^t-1)+t \bigg),\ \ 0 \le z \le Z^o,\ 0 \le t \leq T.
\end{eqnarray}
In order to complete the proof of the uniform boundedness of $g^n$, it is required to show that the following inequality holds
\begin{eqnarray}\label{uniform8}
g^n(z, t) \leq A(z, t),\ \ \ 0 \le z \le Z^o,\ 0 \leq t \leq T,\ n \in \mathds{N}.
\end{eqnarray}

(\ref{uniform8}) is shown by a contradiction. Next, we define the following auxiliary function as
\begin{eqnarray}\label{uniform9}
A_{\epsilon}(z, t):= A(0)+\epsilon + \int_0^t \bigg[ c (1+Z^o) (A_{\epsilon} * A_{\epsilon})(z, s) + A_{\epsilon}(z, s) \bigg] ds.
\end{eqnarray}
From (\ref{uniform5}) and (\ref{uniform6}), it is clear that $g^n_0(z) <  A_{\epsilon}(z, 0)$, for $0 \le z \le Z^o$. Let us assume that, for $n \geq 1$, there exists a set $Q$ such that $g^n(z, t) > A_{\epsilon}(z, t)$ for $(z, t)\in \Xi (0, Z^o; T)$. Since $Q$ does not contain points on the co-ordinate axes, we choose $(x_1, t_1)\in Q$ such that no points of $D:=[0, x_1)\times [0, t_1) $ in $Q$. Again, since $g^n$ and $A_{\epsilon}$ are continuous functions with respect to time variable and $g^n(z, t) \le A_{\epsilon}(z, t)$ in $D$, then we obtain
\begin{align}\label{uniform9}
g^n(x_1, t_1) > A_{\epsilon}(x_1, t_1)= & A(0)+\epsilon + \int_0^{t_1} \bigg[ c (1+Z^o) (A_{\epsilon} * A_{\epsilon})(x_1, s) + A_{\epsilon}(x_1, s) \bigg] ds\nonumber\\
> & g^n(x_1, 0)+\epsilon + \int_0^{t_1} \bigg[ c (1+Z^o) (g^n * g^n)(x_1, s) + g^n(x_1, s) \bigg] ds\nonumber\\
> & g^n(x_1, t_1),
\end{align}
which is a contradiction. This concludes that $Q$ is an empty set.
This gives
\begin{eqnarray}\label{uniform10}
g^n(z, t) \le A(z, t),\ \ \ 0 \leq z \leq Z^o ,\ 0 \leq t \leq T,\ n \in \mathds{N}.
\end{eqnarray}
Thus, we have
\begin{eqnarray*}
g^n(z, t) \leq \mathcal{S}(T),
\end{eqnarray*}
where $\mathcal{S}(T):=A(0) \exp \bigg( A(0) c (1+Z^o) Z^o (e^T-1)+T \bigg) $. Hence, the sequence $\{g^n\}_{n \in \mathds{N}}$ is uniformly bounded on $\Xi( 0, Z^o; T).$

\end{proof}

Further, we check the time  equicontinuity  of the family $\{g^n(t)\}$ on each compact set $\Xi (0, Z^o; T)$.
\subsection{Equicontinuity in time}
\begin{lem}\label{4equicontinuiuty}
Let $T \in (0, \infty)$ and $z \in [0, Z^{o}]$. Assume $g_0 \in \Lambda^+_{-\sigma_1, \sigma_2}(0)$. Suppose $g^n$ be a solution to \eqref{4trun cccecf}--\eqref{4trunc ccecfin1}. Then prove the following
\begin{eqnarray*}
\lim_{h \to 0} \sup_{t \in [0, T-h]} | g^n(z, t+h) - g^n (z, t)| =0.
\end{eqnarray*}
\end{lem}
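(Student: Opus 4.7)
The plan is to estimate $|g^n(z,t+h)-g^n(z,t)|$ by writing it as the time integral of $\partial_s g^n(z,s)$ and showing that this time derivative is bounded uniformly in $n$ and in $(z,s)\in\Xi(0,Z^o;T)$. Once we have a bound $|\partial_s g^n(z,s)|\le K(Z^o,T)$ valid for all $n$, $s\in[0,T]$, $z\in[0,Z^o]$, the conclusion follows from
\begin{equation*}
|g^n(z,t+h)-g^n(z,t)|\le \int_t^{t+h}|\partial_s g^n(z,s)|\,ds \le K(Z^o,T)\,h,
\end{equation*}
so the supremum over $t\in[0,T-h]$ tends to $0$ as $h\to 0$.

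To bound $\partial_s g^n$, I would estimate each of the three terms on the right-hand side of \eqref{4trun cccecf} separately, using the uniform pointwise bound $g^n(z,t)\le \mathcal{S}(T)$ on $[0,Z^o]$ from Lemma \ref{Uniform bounded of approximated solutions} together with the moment estimates of Lemma \ref{Classicaluniboundlemma1}. For the coagulation gain term $\mathcal{C}(g^n)$, since $z\in[0,Z^o]$ all arguments $z-z_1$ and $z_1$ lie in $[0,Z^o]$, so $\Phi(z-z_1,z_1)\le 2c(Z^o)^{\alpha+\alpha'}$ and the two factors $g^n$ are bounded by $\mathcal{S}(T)$; the remaining $dz_1$ integral has length at most $Z^o$. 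For the death term, factor out $g^n(z,t)\le\mathcal{S}(T)$ and use $\Phi(z,z_1)\le c(Z^o)^{\alpha}(1+z_1)+c(Z^o)^{\alpha'}(1+z_1)$ (since $\alpha,\alpha'\le 1/2\le 1$), so the integral is controlled by $\Gamma_0+\Gamma_1$.

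The delicate term is $\mathcal{B}^*(g^n)$, where the factor $P(z|z_1-z_2;z_2)=2/z_1$ creates a singularity. The trick, used earlier in the paper (e.g., the derivation leading to \eqref{uniform3}), is to swap the order of integration by Fubini, change variables $z_1-z_2=z_1'$, $z_2=z_2'$, and bound $\mathbf{1}_{z_1'+z_2\ge z}/(z_1'+z_2)\le 1/(z_1'+z_2)\le z_2^{-1}$ (or symmetrically $z_1'^{-1}$). Combined with $(\alpha_2)$, this yields an integrand dominated by $c(z_1'^{\alpha}z_2^{\alpha'-1}+z_1'^{\alpha'-1}z_2^{\alpha})g^n(z_1')g^n(z_2)$. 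Splitting each integral at $1$ and using the hypothesis $\max(1-\alpha,1-\alpha')\le \sigma_1$, the negative-power pieces are controlled by $\mathcal{M}_{-\sigma_1,n}\le \Gamma_{-\sigma_1}(T)$, and the positive-power pieces by $\Gamma_0+\Gamma_1$. This gives a bound independent of $n$ and of $(z,t)\in\Xi(0,Z^o;T)$.

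The main obstacle, as just indicated, is taming the $1/z_1$ factor in $\mathcal{B}^*(g^n)$ without letting the $z_1$-integral blow up near the origin; the Fubini/change-of-variable maneuver together with the negative moment bound $\Gamma_{-\sigma_1}(T)$ from Lemma \ref{Classicaluniboundlemma1} is precisely what makes this work. With all three terms bounded by a constant $K(Z^o,T)$ independent of $n$, $s$, and $z\in[0,Z^o]$, the displayed limit follows.
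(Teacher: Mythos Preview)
Your proposal is correct and follows essentially the same approach as the paper: the paper writes $|g^n(z,t+h)-g^n(z,t)|\le G_1^n+G_2^n+G_3^n$ as the time integral from $t$ to $t+h$ of the three terms in \eqref{4trun cccecf} and bounds each by a constant times $h$, using exactly the tools you describe---the pointwise bound $\mathcal{S}(T)$ for the coagulation gain, the moment bounds $\Gamma_0,\Gamma_1$ for the death term, and the Fubini/change-of-variable trick together with $\frac{1}{z_1'+z_2}\le z_1'^{-1}$ and the negative moment $\Gamma_{-\sigma_1}(T)$ for $\mathcal{B}^*(g^n)$. The only cosmetic difference is that you phrase it as bounding $|\partial_s g^n|$ while the paper integrates first.
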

\begin{proof}
Let $h \in (0, T)$ with $h<1$ and $t\in [0, T-h]$. Next, consider the following integral as
\begin{align}\label{4equicontinuiuty2}
|g^n(z, t+h) - & g^n (z, t) | \nonumber\\
\le & \frac{1}{2} \int_t^{t+h}\int_0^{z} E(z-z_1, z_1) G^n(z-z_1, z_1, s) dz_1 ds  +    \int_t^{t+h} \int_0^{n}  G^n(z, z_1, s)  dz_1 ds \nonumber\\
+ &  \int_t^{t+h}  \int_z^{n} \int_0^{z_1} \frac{1}{z_1} E_1(z_1-z_2, z_2) G^n(z_1-z_2, z_2, s)dz_2 dz_1 ds\nonumber\\
=:& G_1^n+G_2^n+G_3^n.
\end{align}
Let us estimate $G_1^n$, by applying $(\alpha_2)$ and Lemma \ref{Uniform bounded of approximated solutions}, as
\begin{align*}
G_1^n 
\leq &  \frac{c}{2} \int_t^{t+h} \int_0^z  [(z-z_1)^{\alpha}z_1^{\alpha'}+(z-z_1)^{\alpha'}z_1^{\alpha}] g^n(z-z_1, s) g^n(z_1, s)dz_1  ds \nonumber\\
\le &  \frac{c}{2}   \int_t^{t+h} \int_0^z  [(1+z)^{\alpha}(1+z_1)^{\alpha'}+(1+z)^{\alpha'}(1+z_1)^{\alpha}] g^n(z-z_1, s) g^n(z_1, s)dz_1  ds \nonumber\\
\le &  c (1+Z^o) \mathcal{S}(T) \int_t^{t+h} \int_0^z   g^n(z-z_1, s) dz_1  ds \le   c (1+Z^o) Z^o \mathcal{S}(T)^2 h.
\end{align*}
Similarly, $G_2^n$ can be estimated, by using $(\alpha_2)$, Lemma \ref{Classicaluniboundlemma1} and Lemma \ref{Uniform bounded of approximated solutions}, as
\begin{align*}
G_2^n = &  c \int_t^{t+h} \int_0^{n}  (z^{\alpha}z_1^{\alpha'}+z^{\alpha'}z_1^{\alpha}) g^n(z, s) g^n(z_1, s)dz_1  ds\nonumber\\
\leq  &  2c (1+Z^o)^{1/2} \mathcal{S}(T)  \int_t^{t+h} \int_0^{n} (1+z_1) g^n(z_1, s)dz_1  ds \le 2 c (1+Z^o)^{1/2} \mathcal{S}(T) (\Gamma_0 +\Gamma_1) h.
\end{align*}
An repeated application of Fubini's theorem, and using $(\alpha_2)$, \eqref{binarybreakage}, the transformation $z_1-z_2 =z_1'$ $\&$ $z_2 =z_2'$, Lemma \ref{Classicaluniboundlemma1} and $1-\alpha \le \sigma_1$, we evaluate $G_3^n$ as
\begin{align*} 
G_3^n \le &  c  \int_t^{t+h} \int_z^{n}   \int_0^{z_1} \frac{1}{z_1}
[(z_1-z_2)^{\alpha}z_2^{\alpha'}+(z_1-z_2)^{\alpha'}z_2^{\alpha}] g^n(z_1-z_2, s) g^n(z_2, s)dz_2 dz_1  ds\nonumber\\
  =&  c  \int_t^{t+h} \int_0^{z} \int_{z-z_2}^{n-z_2} \frac{1}{(z_1+z_2)}(z_1^{\alpha}z_2^{\alpha'}+z_1^{\alpha'}z_2^{\alpha}) g^n(z_1, s) g^n(z_2, s)dz_1 dz_2  ds\nonumber\\
 & + c   \int_t^{t+h} \int_z^{n} \int_0^{n-z_2} \frac{1}{(z_1+z_2)} (z_1^{\alpha}z_2^{\alpha'}+z_1^{\alpha'}z_2^{\alpha}) g^n(z_1, s) g^n(z_2, s)dz_1 dz_2  ds \nonumber\\
 \le &  c \int_t^{t+h} \int_0^{n} \int_{0}^{n}  (z_1^{\alpha-1}z_2^{\alpha'}+z_1^{\alpha'}z_2^{\alpha-1}) g^n(z_1, s) g^n(z_2, s)dz_1 dz_2  ds \nonumber\\
\le &  2c \int_t^{t+h} \int_0^{n} \int_{0}^{n}  z_1^{\alpha-1}(1+z_2)  g^n(z_1, s) g^n(z_2, s)dz_1 dz_2  ds \nonumber\\
\le &  2c (\Gamma_0+\Gamma_1) \int_t^{t+h} \bigg[ \int_{0}^{1}  z_1^{\alpha-1} g^n(z_1, s) dz_1+\int_{1}^{n}  z_1^{\alpha-1} g^n(z_1, s) dz_1 \bigg] ds\nonumber\\
\le &  2c (\Gamma_0+\Gamma_1)  ( \Gamma_{-\sigma_1}(T)+\Gamma_1)h.
\end{align*}
Inserting estimates on $G_1^n$, $G_2^n$ and $G_3^n$ into (\ref{4equicontinuiuty2}), we thus obtain
\begin{align*}
| g^n(z, t+h) - g^n (z, t) |
\le & c [   (1+Z^o) Z^o \mathcal{S}(T)^2 +  2 (1+Z^o)^{1/2} \mathcal{S}(T) ( \Gamma_0+\Gamma_1)\nonumber \\
& +   2(\Gamma_0+\Gamma_1)  (\Gamma_{-\sigma_1}(T)+\Gamma_1) ]h.
\end{align*}
As $h$ is an arbitrary. This completes the proof of lemma.
\end{proof}
Now, we state the following remark, which will help to prove next lemma and Theorem \ref{ClassicalExistence Thm}.
\begin{rmkk}\label{remark}
Let $\sigma(z)$ be a non-negative measurable function and $\lambda(z)$ is a positive and increasing function for $z>0$, then
\begin{align*}
\int_{\beta}^{\infty} \sigma(z) dz \le \frac{1}{\lambda(\beta)} \int_0^{\infty} \lambda(z) \sigma(z) dz, \ \text{for}\ \beta >0,
\end{align*}
if the integrals exist and are finite.
\end{rmkk}

Next, we show the equicontinuity of the family of solutions $\{g^n\}_{n\in\mathds{N}}$ with respect to the space variable $z$ in $\Xi (0, Z^{o}; T)$.
\subsection{Equicontinuity in space}
\begin{lem}\label{4equicontinuiutywithspace}
Assume $T \in (0, \infty)$ and $g_0 \in \Lambda^+_{-\sigma_1, \sigma_2}(0)$. Let $g^n$ be a solution to \eqref{4trun cccecf}--\eqref{4trunc ccecfin1}. Then, we have
\begin{eqnarray*}
\lim_{h \to 0} \sup_{t \in [0, T]} | g^n(z+h, t) - g^n (z, t)| =0,\ \ \text{in}\ \Xi (0, Z^{o}; T).
\end{eqnarray*}
\end{lem}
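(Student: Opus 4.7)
I would integrate \eqref{4trun cccecf} in time and write
\begin{align*}
g^n(z+h, t) - g^n(z, t) = [g^n_0(z+h) - g^n_0(z)] + \int_0^t \Delta^n(h, z, s)\, ds,
\end{align*}
where $\Delta^n(h, z, s)$ denotes the difference of the right-hand side of \eqref{4trun cccecf} at $z+h$ and at $z$. The initial-datum term vanishes as $h \to 0$ uniformly in $z \in [0, Z^o]$ because $g_0$ is continuous on $[0, \infty)$ (as an element of $\Lambda^+_{-\sigma_1, \sigma_2}(0)$), hence uniformly continuous on $[0, Z^o+1]$. The plan is then to control the three parts of $\Delta^n$ so as to set up a Gronwall inequality for
\begin{align*}
\omega^n(h, t) := \sup_{0 \le z \le Z^o} \bigl| g^n(z+h, t) - g^n(z, t) \bigr|.
\end{align*}

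\textbf{The three estimates.} For the sink term I would split
\begin{align*}
\mathcal{B}(g^n)(z+h, s) - \mathcal{B}(g^n)(z, s) &= [g^n(z+h, s) - g^n(z, s)] \int_0^n \Phi_n(z+h, z_1) g^n(z_1, s)\, dz_1 \\
&\quad + g^n(z, s) \int_0^n [\Phi_n(z+h, z_1) - \Phi_n(z, z_1)] g^n(z_1, s)\, dz_1,
\end{align*}
bounding the first summand by $\omega^n(h, s)$ times a constant coming from $(\alpha_2)$ and Lemma~\ref{Classicaluniboundlemma1}, and the second by $\mathcal{S}(T)(\Gamma_0+\Gamma_1)$ times the uniform modulus of continuity of $z \mapsto z^{\alpha}$ and $z \mapsto z^{\alpha'}$ on $[0, Z^o+1]$. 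For the coagulation term I would change variable $u = z - z_1$, split the outer domain into $[0, z]$ and $[z, z+h]$ (the latter giving a contribution of order $h$ via Lemma~\ref{Uniform bounded of approximated solutions}), and again decompose the remainder into a piece proportional to $\omega^n(h, s)$ plus a piece controlled by the uniform continuity of $E\Phi_n$ on $[0, Z^o+1]^2$. For the gain term the choice \eqref{binarybreakage} gives $P(z\mid z_1-z_2; z_2) = 2/z_1$, which is independent of $z$, so that
\begin{align*}
\mathcal{B}^{*}(g^n)(z+h, s) - \mathcal{B}^{*}(g^n)(z, s) = -\int_z^{z+h}\!\!\int_0^{z_1} \frac{E_1(z_1-z_2, z_2)\Phi_n(z_1-z_2, z_2)}{z_1}\, g^n(z_1-z_2, s) g^n(z_2, s)\, dz_2\, dz_1,
\end{align*}
and using $(z_1-z_2)^{\alpha}z_2^{\alpha'}+(z_1-z_2)^{\alpha'}z_2^{\alpha} \le 2 z_1^{\alpha+\alpha'}$ for $0 \le z_2 \le z_1$ together with $g^n \le \mathcal{S}(T)$ on $\Xi(0, Z^o+1; T)$ yields a bound of order $h$.

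\textbf{Gronwall and conclusion.} Collecting the three estimates gives
\begin{align*}
|g^n(z+h, t) - g^n(z, t)| \le |g^n_0(z+h) - g^n_0(z)| + \Theta(h) + C(T) \int_0^t \omega^n(h, s)\, ds,
\end{align*}
with $\Theta(h) \to 0$ as $h \to 0$ (uniformly in $z$ and $n$) and $C(T)$ independent of $n$. Taking the supremum over $z \in [0, Z^o]$ and invoking Gronwall's inequality yields $\omega^n(h, t) \le (\omega^n_0(h) + \Theta(h)) e^{C(T)T}$ for all $t \in [0, T]$, which tends to $0$ as $h \to 0$ uniformly in $t$ (and in $n$), delivering the claimed equicontinuity.

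\textbf{Main obstacle.} The most delicate step is the $\mathcal{B}^{*}$ estimate: the factor $1/z_1$ in the integrand is singular at $z_1 = 0$, and only the combination of the constraint $z_2 \le z_1$ with $\alpha, \alpha' > 0$ from $(\alpha_2)$ saves the argument, leaving the integrable power $z_1^{\alpha+\alpha'-1}$ after the inner integration. A secondary point is to keep every constant independent of $n$, which is automatic once $n \ge Z^o+1$ because then $\Phi_n = \Phi$ on all compact quadrants that appear in the estimates, so the modulus-of-continuity bounds on $\Phi_n$ reduce to those of $\Phi$ itself.
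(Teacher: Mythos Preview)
Your proposal is correct and follows the same Gronwall-based strategy as the paper: decompose the integrated right-hand side into a small remainder $\Theta(h)$ plus a term linear in $\omega^n(h,s)$, then close via Gronwall. You streamline two technical steps relative to the paper --- by exploiting the separable structure of $(\alpha_2)$ you avoid the tail splitting at $Z_1^o$ that the paper needs for its $\mathcal{H}_7^n$, and by bounding $g^n$ pointwise via Lemma~\ref{Uniform bounded of approximated solutions} in the $\mathcal{B}^{*}$ difference you bypass the change of variables and negative-moment estimate $\Gamma_{-\sigma_1}(T)$ that the paper uses for $\mathcal{H}_8^n$.
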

\begin{proof}
Let $0 \le z < z+h  \leq  Z^{o}$. Then for each $n\geq 1$, by applying triangle inequality and \eqref{binarybreakage}, we arrange the below term in the following manner
\begin{align}\label{eqconspace1}
|g^n&(z+h, t)-g^n(z, t)| \leq  |g_0^n(z+h )-g_0^n(z)|\nonumber\\
&+  \frac{1}{2} \bigg|\int_0^t \int_{z}^{z+h }E(z+h-z_1, z_1 ) G^n(z+h-z_1, z_1, s) dz_1 ds \bigg| \nonumber\\
&+ \frac{1}{2} \bigg| \int_0^t \int_0^{z} E(z+h-z_1, z_1) [\Phi_n(z+h-z_1, z_1, s)-  \Phi_n(z-z_1, z_1, s)]\nonumber\\
&~~~~~~~~~~~~~~~\times g^n(z-z_1, s) g^n(z_1, s)dz_1 ds \bigg| \nonumber\\
&+ \frac{1}{2} \bigg| \int_0^t \int_0^{z} E(z+h-z_1, z_1)  \Phi_n(z-z_1, z_1)[ g^n(z+h-z_1,s) -g^n(z-z_1,s)] g^n(z_1, s)dz_1 ds \bigg| \nonumber\\
&+ \frac{1}{2} \bigg| \int_0^t \int_0^{z} [E(z+h-z_1, z_1)-E(z-z_1, z_1)] G^n(z-z_1, z_1, s) dz_1 ds \bigg| \nonumber\\
&+\int_0^t | g^n(z+h, s)-g^n(z, s)|\int_0^{n}\Phi_n(z, z_1)g^n(z_1, s)dz_1 ds\nonumber\\
&+\int_0^t g^n(z+h, s)\int_0^{n} |\Phi_n(z+h, z_1)-\Phi_n(z, z_1)|g^n(z_1, s)dz_1 ds\nonumber\\
&+ \bigg| \int_0^t \int_z^{z+h} \int_0^{z_1} \frac{1}{z_1} E_1(z_1-z_2, z_2) G^n(z_1-z_2, z_2, s) dz_2 d z_1 ds\bigg|
=: \sum_{i=1}^{8} \mathcal{H}_i^n(t),
\end{align}
where $\mathcal{H}_i^n(t)$, for $i=1,2,3,\cdots,8$, are the corresponding integrands in the preceding line. In order to show $\{g^n\}_{n\in\mathds{N}}$ is equicontinuous with respect to $z$ it is sufficient to prove that as $h \to 0$, the left-hand side of \eqref{eqconspace1} approaches to zero. From \eqref{collkernel} and $(\alpha_1)$, we have $\Phi_n$ is uniformly continuous (not depending $n$ also) over $[0, Z^o] \times [0, Z_1^o]$. Thus, choose $h$ such that the following hold
\begin{eqnarray}\label{eqconspace2}
 \lim_{h \to 0} \sup_{z \in [0, Z^o] }  |g_0^n(z+h)-g_0^n(z)|=0,
\end{eqnarray}
\begin{eqnarray}\label{eqconspace3}
 \lim_{h \to 0} \sup_{(z, z_1) \in [0, Z^o] \times [0, Z_1^o] } |\Phi_n(z+h, z_1)-\Phi_n(z, z_1)|=0
 \end{eqnarray}
and
\begin{eqnarray}\label{eqconspaceE}
 \lim_{h \to 0} \sup_{(z, z_1) \in [0, Z^o] \times [0, Z_1^o] }  |E(z+h, z_1)-E(z, z_1)|=0,
 \end{eqnarray}
 where $0 \leq z < z+h \leq Z^o $, $0 \leq z_1  \leq Z_1^o$ with $Z^o>1 $ $\&$ $Z^o_1>1 $. The manner of choosing these $Z^o$ and $Z_1^o$ are described later. Next, we introduce the modulus of continuity as
 \begin{eqnarray*}
 \omega _n (t) := \sup_{t \in [0, T]} |g_n(z+h, t)-g_n(z, t)|,\ \ \ 0 \le z < z+h \leq Z^{o}.
 \end{eqnarray*}
Let us first estimate $\mathcal{H}_2^n(t)$, by using $(\alpha_2)$ and Lemma \ref{Uniform bounded of approximated solutions}, as
\begin{align*}
\mathcal{H}_2^n(t) 
\le & \frac{c}{2} \int_0^t \int_{z}^{z+h } [(z+h-z_1)^{\alpha}z_1^{\alpha'}+(z+h-z_1)^{\alpha'}z_1^{\alpha}]  g^n(z+h-z_1,s) g^n(z_1, s)dz_1 ds \nonumber\\
\leq & c (1+Z^o) \mathcal{S}(T)^2 T h.
\end{align*}
Using Lemma \ref{Uniform bounded of approximated solutions}, we evaluate $\mathcal{H}_3^n(t)$ as
\begin{align*}
\mathcal{H}_3^n(t) \le & \frac{1}{2} \int_0^t \int_0^{z} |\Phi_n(z+h-z_1, z_1)-  \Phi_n(z-z_1, z_1)| g^n(z-z_1, s)   g^n(z_1, s)dz_1 ds \nonumber\\
 \leq & \frac{1}{2} \sup_{(z, z_1) \in [0, Z^o] \times [0, z] }|\Phi_n(z+h-z_1, z_1)-\Phi_n(z-z_1, z_1)| \mathcal{S}(T)^2 Z^oT.
\end{align*}
Similarly, $\mathcal{H}_4^n(t)$ can be evaluated, by using $(\alpha_2)$, Lemma \ref{Uniform bounded of approximated solutions} and the definition of $\omega_n(s)$,  as
\begin{align*} 
\mathcal{H}_4^n(t) 
\le & \frac{1}{2}  \int_0^t \int_0^{z}  \Phi_n(z-z_1, z_1)| g^n(z+h-z_1,s) -g^n(z-z_1,s)| g^n(z_1, s)dz_1 ds \nonumber\\
\leq  & c (1+Z^o) \mathcal{S}(T) \int_0^t \int_0^{z}   | g^n(z+h-z_1,s) -g^n(z-z_1,s)| dz_1 ds \nonumber\\
\le & c (1+Z^o) \mathcal{S}(T) Z^o \int_0^t \omega_n(s)ds.
\end{align*}
Further, $\mathcal{H}_5^n(t)$ can be estimated, by applying $(\alpha_2)$ and Lemma \ref{Uniform bounded of approximated solutions}, as
\begin{align*}
\mathcal{H}_5^n(t) =& \frac{1}{2} \bigg| \int_0^t \int_0^{z} [E(z+h-z_1, z_1)-E(z-z_1, z_1)] \Phi_n(z-z_1, z_1) g^n(z-z_1,s)  g^n(z_1, s)dz_1 ds \bigg| \nonumber\\
\le &  2c (1+Z^o) Z^o  \mathcal{S}(T)^2T \sup_{(z, z_1) \in [0, Z^o] \times [0, z] } |E(z+h-z_1, z_1)-E(z-z_1, z_1)|.
\end{align*}
Next, $\mathcal{H}_6^n(t)$ can be evaluated, by using $(\alpha_2)$, Lemma \ref{Classicaluniboundlemma1} and the definition of $\omega_n(t)$, as
\begin{align*}
\mathcal{H}_6^n(t) 
\leq & c \int_0^t | g^n(z+h, s)-g^n(z, s)|\int_0^{n}    [z^{\alpha}z_1^{\alpha'}+z^{\alpha'}z_1^{\alpha}]    g^n(z_1, s)dz_1 ds\nonumber\\
\leq &   2c  (1+Z^o)^{1/2} (\Gamma_0+ \Gamma_1) \int_0^t \omega_n( s)  ds.
\end{align*}
By applying Lemma \ref{Uniform bounded of approximated solutions} and $(\alpha_2)$, we estimate $\mathcal{H}_7^n(t)$ as
\begin{align}\label{4H71}
\mathcal{H}_7^n(t)
\le & \int_0^t g^n(z+h, s)\int_0^{Z_1^o} |\Phi_n(z+h, z_1)-\Phi_n(z, z_1)|g^n(z_1, s)dz_1 ds\nonumber\\
&+\int_0^t g^n(z+h, s)\int_{Z_1^o}^{n} |\Phi_n(z+h, z_1)-\Phi_n(z, z_1)|g^n(z_1, s)dz_1 ds\nonumber\\
\le &  \mathcal{S}(T)   \bigg[ \mathcal{S}(T) \sup_{(z, z_1) \in [0, Z^o] \times [0, Z_1^o] }  |\Phi_n(z+h, z_1)-\Phi_n(z, z_1)| Z_1^o T \nonumber\\
 &~~~~~~~~~~+4c(1+Z^o+h) \int_0^t\int_{Z_1^o}^{\infty} z_1 g^n(z_1, s)dz_1 ds \bigg].
\end{align}
We choose $Z_1^o >1$ and $\epsilon(h) >0$ (depending on $h$) such that ${Z_1^o}^{-1}\Gamma_2(T) < \epsilon(h) $. Then, applying Remark \ref{remark} to (\ref{4H71}), we get
\begin{align*} 
\mathcal{H}_7^n(t) \le   \mathcal{S}(T)  [ \mathcal{S}(T) \sup_{(z, z_1) \in [0, Z^o] \times [0, Z_1^o] }  |\Phi_n(z+h, z_1)-\Phi_n(z, z_1)| Z_1^o T +4 c(1+Z^o+h)T\epsilon(h)].
\end{align*}
Finally, we evaluate $\mathcal{H}_8^n(t)$, by using \eqref{binarybreakage}, $(\alpha_2)$, Fubini's theorem, the transformation $z_1-z_2=z_1'$ $\&$ $z_2=z_2'$, Lemma \ref{Uniform bounded of approximated solutions} and Lemma \ref{Classicaluniboundlemma1}, as
\begin{align*}
\mathcal{H}_8^n(t) 
\le & c \int_0^t \int_z^{z+h} \int_0^{z_1}\frac{1}{z_1} [(z_1-z_2)^{\alpha}z_2^{\alpha'}+(z_1-z_2)^{\alpha'}z_2^{\alpha}]  g^n(z_1-z_2, s) g^n(z_2, s) dz_2 d z_1 ds  \nonumber\\
\le &  c \int_0^t \int_0^{z} \int_{z-z_2}^{z+h-z_2}\frac{1}{(z_1+z_2)} [ z_1^{\alpha}z_2^{\alpha'}+ z_1^{\alpha'}z_2^{\alpha}]  g^n(z_1, s) g^n(z_2, s) dz_1 d z_2 ds  \nonumber\\
 & +  c \int_0^t \int_z^{z+h} \int_0^{z+h-z_2}\frac{1}{(z_1+z_2)} [ z_1^{\alpha} z_2^{\alpha'}+ z_1^{\alpha'}z_2^{\alpha}]  g^n(z_1, s) g^n(z_2, s) dz_1 d z_2 ds  \nonumber\\
 \le &  c (1+Z^o)^{1/2} \mathcal{S}(T) \int_0^t \int_0^{z} \int_{z-z_2}^{z+h-z_2}  ( z_2^{\alpha-1}+ z_2^{\alpha'-1})  g^n(z_2, s)  dz_1 d z_2 ds  \nonumber\\
 & +  c (1+Z^o)^{1/2} \mathcal{S}(T) \int_0^t \int_z^{z+h} \int_0^{z+h-z_2}  (  z_1^{\alpha'-1}+ z_1^{\alpha-1})  g^n(z_1, s) dz_1 d z_2 ds  \nonumber\\
 \le & 4c  \mathcal{S}(T)  (\Gamma_{-\sigma_1}(T) +\Gamma_1) (1+Z^o)^{1/2} T h.
\end{align*}
Using \eqref{eqconspace2}, \eqref{eqconspace3} and \eqref{eqconspaceE},
 and then taking limit $h \to 0$ to the estimates on $\mathcal{H}_i^n(t)$, for $i=2,3,\cdots, 8$,  and finally inserting them into (\ref{eqconspace1}) for arbitrary $\epsilon(h) >0$, we have
\begin{align*}
\lim_{h \to 0}|g^n(z+h, t)-g^n(z, t)| \le [ c (1+Z^o) \mathcal{S}(T) Z^o +  2c  (1+Z^o)^{1/2} (\Gamma_0+ \Gamma_1) ]  \lim_{h \to 0} \int_0^t \omega_n( s)  ds.
\end{align*}
Then, by Gronwall's inequality, we obtain
\begin{align*}
\lim_{h \to 0} |g^n(z+h, t)-g_n(z, t)|  =0.
\end{align*}
This completes the proof of Lemma \ref{4equicontinuiutywithspace}.
\end{proof}
Next, from Lemma \ref{Uniform bounded of approximated solutions}, Lemma \ref{4equicontinuiuty}, Lemma \ref{4equicontinuiutywithspace} and Arzel\'{a}'s theorem \cite{Ash:1972, Edwards:1965}, we confirm that $\{g^n\}_{n \in \mathds{N}}$ is relatively compact in $\Xi(0, Z^o; T)$ which ensures that there exists a continuous function $g$ such that
\begin{equation}\label{limitconvergencestrong}
g^n \rightarrow g
\end{equation}
uniformly on each compact set $\Xi(0, Z^o; T)$ of $\Xi$.


\begin{proof}
\emph{of the Theorem \ref{ClassicalExistence Thm}}: In order to complete the proof of the Theorem \ref{ClassicalExistence Thm}, we require to use a diagonal method. According to this process, we select a subsequence of $\{g^n\}_{n \in \mathbb{N}}$ (say $\{g^j\}_{j=1}^{\infty}$) converging uniformly on each compact set in $\Xi$ to a non-negative continuous function $g$.\\
Let us consider the following integral as
\begin{align*}
\int_0^{u} (z^{-\sigma_1}+ z^{\sigma_2}) g(z, t)dz.
\end{align*}
Then for all $\epsilon >0$, there exists $j\geq 1$ such that
\begin{align*}
\int_0^{u} (z^{-\sigma_1}+ z^{\sigma_2}) |g(z, t)-g^j(z, t)|dz \leq \epsilon.
\end{align*}
Since $u$ and $\epsilon $ are arbitrary. Thus, from Lemma \ref{Classicaluniboundlemma1}, we obtain
\begin{align}\label{Existence0}
\int_0^{\infty} (z^{-\sigma_1}+ z^{\sigma_2})  g(z, t)dz \leq   \Gamma_{-\sigma_1}(T) +\Gamma_{\sigma_2}(T).
\end{align}
As we have $1/2 \le \sigma_1 < 1$ and $1< \sigma_2 \le 2$ and then from \eqref{Existence0}, we clear that $g \in \Lambda^+_{-\sigma_1, \sigma_2}(T) \cap ( \Lambda^+_{-1/2, -\sigma_1}(T) \cup  \Lambda^+_{1, \sigma_2}(T))$. Next, we require to show that $g$ is actually a solution to \eqref{cecbeclassi}--\eqref{4in1}. For this, let us consider the following, by using \eqref{binarybreakage}, as
\begin{align}\label{Existence1}
 (g^j-g)&(z, t)+g(z, t)\nonumber\\ =& g_0^j(z)+\int_0^t \bigg[ \frac{1}{2} \int_0^{z}E(z-z_1,z_1) (\Phi_j-\Phi)(z-z_1,z_1)g^j(z-z_1,s)g^j(z_1,s)dz_1 \nonumber\\
&+ \frac{1}{2} \int_0^{z} E(z-z_1,z_1) \Phi(z-z_1,z_1)[g^j(z-z_1,s)-g(z-z_1,s)]g^j(z_1,s)dz_1 \nonumber\\
&+ \frac{1}{2} \int_0^{z}  E(z-z_1,z_1) \Phi(z-z_1,z_1)[g^j(z_1,s)-g(z_1,s)] g(z-z_1,s)dz_1 + \mathcal{C}(g)(z, s) \nonumber\\
 &-g^j(z,s)\int_0^{\infty}(\Phi_j-\Phi)(z,z_1)   g^j(z_1,s)dz_1 -(g^j-g)(z,s)\int_0^{\infty}   \Phi(z, z_1) g^j(z_1,s)dz_1\nonumber \\
  &-g(z,s)\int_0^{\infty}   \Phi(z,z_1) (g^j-g)(z_1,s)dz_1 -\mathcal{B}(g)(z, s)+\mathcal{B}^*(g)(z, s) \nonumber \\
&+ \int_{z}^{\infty}\int_{0}^{z_1} \frac{1}{z_1}  E_1(z_1-z_2, z_2) (\Phi_j-\Phi)(z_1-z_2,z_2)  g^j(z_1-z_2,s)g^j(z_2,s)dz_2dz_1 \nonumber\\
&+\int_{z}^{\infty}\int_{0}^{z_1} \frac{1}{z_1}  E_1(z_1-z_2, z_2) \Phi(z_1-z_2,z_2) (g^j-g)(z_1-z_2,s)g(z_2,s)dz_2dz_1\nonumber\\
&+\int_{z}^{\infty}\int_{0}^{z_1} \frac{1}{z_1}  E_1(z_1-z_2, z_2) \Phi(z_1-z_2,z_2) (g^j-g)(z_2,s)g(z_1-z_2,s)dz_2dz_1\bigg]ds,
\end{align}
where we replace $\Phi_j$ and $g^j$ by $\Phi_{j}-\Phi+\Phi$ and $g^j-g+g$, respectively.\\

 Let us estimate the tail of the integral involved in the fifth term on the right-hand side to \eqref{Existence1}, by applying $(\alpha_2)$, Lemma \ref{Classicaluniboundlemma1} and Remark \ref{remark}, as
\begin{align}\label{Existence2}
\bigg|\int_{Z_1^o}^{\infty}(\Phi_j-\Phi)(z,z_1)  g^j(z_1,s)dz_1 \bigg| \leq 4c z^{1/2}  {Z_1^o}^{-1} {\Gamma}_2(T).
\end{align}
By applying $(\alpha_2)$ and Lemma \ref{Classicaluniboundlemma1}, we evaluate the sixth term on the right-hand side to \eqref{Existence1} as
\begin{align}\label{Existence3}
\bigg| \int_0^{\infty}  \Phi(z, z_1) g^j(z_1,s)dz_1 \bigg| \le 2c (1+z)^{1/2} ( {\Gamma}_0+ {\Gamma}_1).
\end{align}
Similarly, using $(\alpha_2)$, Remark \ref{remark} and Lemma \ref{Classicaluniboundlemma1}, we estimate tail of the seventh term on the right-hand side to \eqref{Existence1} as
\begin{align}\label{Existence4}
\bigg| \int_{Z_1^o}^{\infty}  \Phi(z,z_1) (g^j-g)(z_1,s)dz_1 \bigg| \leq 4c (1+z)^{1/2}  [ {Z_1^o}^{-1}{\Gamma}_2(T)  + {Z_1^o}^{-1} \|g\|_{ \max \{1+\alpha, 1+\alpha'\}}].
\end{align}
Let us consider the tenth term on the right-hand side to \eqref{Existence1}, by using Fubini's theorem, the transformation $z_1-z_2=z_1'$ $ \& $ $z_2=z_2'$ and replace $z_1\rightarrow z$ $ \& $ $z_2\rightarrow z_1$, as
\begin{align}\label{Eleven1}
\int_{z}^{\infty}\int_{0}^{z_1} & \frac{1}{z_1}   E_1(z_1-z_2, z_2) (\Phi_j-\Phi)(z_1-z_2,z_2)  g^j(z_1-z_2,s)g^j(z_2,s)dz_2dz_1 \nonumber\\
= & \int_{0}^{z}\int_{z-z_2}^{\infty} \frac{1}{(z_1+z_2)}  E_1(z_1, z_2) (\Phi_j-\Phi)(z_1,z_2)  g^j(z_1,s)g^j(z_2,s)dz_1dz_2 \nonumber\\
&+ \int_{z}^{\infty}\int_{0}^{\infty} \frac{1}{(z_1+z_2)}  E_1(z_1, z_2) (\Phi_j-\Phi)(z_1,z_2)  g^j(z_1,s)g^j(z_2,s)dz_1dz_2\nonumber\\
\le & \int_{0}^{\infty}\int_{0}^{\infty} \frac{1}{(z+z_1)}  E_1(z, z_1) |(\Phi_j-\Phi)(z, z_1)|  g^j(z, s)g^j(z_1, s)dz dz_1.
\end{align}
Now, we divide \eqref{Eleven1} into three sub-integrals as
\begin{align}\label{Eleven2}
 \int_{0}^{\infty}&\int_{0}^{\infty}  \frac{1}{(z+z_1)}  E_1(z, z_1) |(\Phi_j-\Phi)(z, z_1)|  g^j(z, s)g^j(z_1, s)dz dz_1 \nonumber\\
=& \bigg\{ \int_{0}^{Z^o}\int_{0}^{Z_1^o}+ \int_{Z^o}^{\infty}\int_{0}^{Z_1^o}+ \int_{0}^{\infty}\int_{Z_1^o}^{\infty} \bigg\}\frac{1}{(z+z_1)}  E_1(z, z_1)\nonumber\\
&~~~~~~~~~~~~~~\times |(\Phi_j-\Phi)(z, z_1)|  g^j(z, s)g^j(z_1, s)dz dz_1
\end{align}
We estimate the second and third integrals respectively, on the right-hand side to \eqref{Eleven2}, by using Lemma \ref{Classicaluniboundlemma1}, $(\alpha_2)$ and Remark \ref{remark}, as
\begin{align}\label{Eleven3}
\int_{Z^o}^{\infty}\int_{0}^{Z_1^o}& \frac{1}{(z+z_1)}  E_1(z, z_1) |(\Phi_j-\Phi)(z, z_1)|  g^j(z, s)g^j(z_1, s)dz dz_1 \nonumber\\
\le & 2c \int_{Z^o}^{\infty}\int_{0}^{Z_1^o} \frac{1}{(z+z_1)} (z^{\alpha} z_1^{\alpha'}+z^{\alpha'} z_1^{\alpha}) g^j(z, s)g^j(z_1, s)dz dz_1 \nonumber\\
\le & 4c (1+Z_1^o)^{1/2} \Gamma_0  {Z^o}^{-1}  \int_{Z^o}^{\infty} z_1 g^j(z_1, s) dz_1 \le 4c (1+Z_1^o)^{1/2} \Gamma_0  \Gamma_1 {Z^o}^{-1},
\end{align}
and
\begin{align}\label{Eleven4}
\int_0^{Z^o} \int_{Z_1^o}^{\infty} & \frac{1}{(z+z_1)}  E_1(z, z_1) |(\Phi_j-\Phi)(z, z_1)|  g^j(z, s)g^j(z_1, s)dz dz_1\nonumber\\
 \le & 4c (1+Z^o)^{1/2} \Gamma_0  \Gamma_1 {Z_1^o}^{-1}.
\end{align}
Next, consider the eleventh integral on the right-hand side to \eqref{Existence1}, by using Fubini's theorem, the transformation $z_1-z_2=z_1'$ $\&$ $z_2=z_2'$ and replace $z_1\rightarrow z$ $ \& $ $z_2\rightarrow z_1$, as
\begin{align}\label{Twelfth}
\int_{z}^{\infty}\int_{0}^{z_1} & \frac{1}{z_1}  E_1(z_1-z_2, z_2) \Phi(z_1-z_2,z_2) (g^j-g)(z_1-z_2,s)g(z_2,s)dz_2dz_1\nonumber\\
= & \int_{0}^{z}\int_{z-z_2}^{\infty} \frac{1}{(z_1+z_2)} E_1(z_1, z_2) \Phi(z_1, z_2) (g^j-g)(z_1, s)g(z_2, s) dz_1dz_2 \nonumber\\
&+ \int_{z}^{\infty}\int_{0}^{\infty} \frac{1}{(z_1+z_2)}  E_1(z_1, z_2) \Phi(z_1, z_2) (g^j-g)(z_1, s)g(z_2,s) dz_1dz_2\nonumber\\
\le & \int_{0}^{\infty}\int_{0}^{\infty} \frac{1}{(z+z_1)}  E_1(z, z_1) \Phi(z, z_1) |(g^j-g)(z, s)|g(z_1, s)dz dz_1.
\end{align}

We estimate the tail from \eqref{Twelfth}, by using Lemma \ref{Classicaluniboundlemma1}, $(\alpha_2)$ and Remark \ref{remark}, as
\begin{align}\label{Twelfth1}
 \int_{0}^{\infty}\int_{Z^o}^{\infty}& \frac{1}{(z+z_1)}  E_1(z, z_1) \Phi(z, z_1) |(g^j-g)(z, s)|g(z_1, s)dz dz_1 \nonumber\\
 \le & c\int_{0}^{\infty}\int_{Z^o}^{\infty} \frac{1}{(z+z_1)} (z^{\alpha} z_1^{\alpha'}+z^{\alpha'} z_1^{\alpha}) [(g^j+g)(z, s)] g(z_1, s)dz dz_1 \nonumber\\
 \le &  c \|g\|_{-\sigma_1, \sigma_2}  {Z^o}^{-1} \int_{Z^o}^{\infty}  (z^{1+\alpha}+z^{1+\alpha'}) [(g^j+g)(z, s)] dz\nonumber\\
  \le &  2c \|g\|_{-\sigma_1, \sigma_2}  {Z^o}^{-1} [\Gamma_2(T)+ \|g\|_{\sigma_2 } ].
\end{align}
Finally consider the last term on the right-hand side to \eqref{Existence1}, by using Fubini's theorem, the transformation $z_1-z_2=z_1'$ $\&$ $z_2=z_2'$ and replace $z_1\rightarrow z$ $ \& $ $z_2\rightarrow z_1$, as
\begin{align}\label{Lasttermclassical}
\int_{z}^{\infty}\int_{0}^{z_1} & \frac{1}{z_1}  E_1(z_1-z_2, z_2) \Phi(z_1-z_2,z_2) (g^j-g)(z_2,s)g(z_1-z_2,s)dz_2dz_1\nonumber\\
= & \int_{0}^{z}\int_{z-z_2}^{\infty} \frac{1}{(z_1+z_2)} E_1(z_1, z_2) \Phi(z_1, z_2) (g^j-g)(z_2, s)g(z_1, s) dz_1dz_2 \nonumber\\
&+ \int_{z}^{\infty}\int_{0}^{\infty} \frac{1}{(z_1+z_2)}  E_1(z_1, z_2) \Phi(z_1, z_2) (g^j-g)(z_2, s)g(z_1,s) dz_1dz_2\nonumber\\
\le & \int_{0}^{\infty}\int_{0}^{\infty} \frac{1}{(z+z_1)}  E_1(z, z_1) \Phi(z, z_1) |(g^j-g)(z_1, s)|g(z, s)dz dz_1.
\end{align}
By applying Lemma \ref{Classicaluniboundlemma1}, $(\alpha_2)$ and Remark \ref{remark}, we consider the following tail from \eqref{Lasttermclassical} as
\begin{align}\label{Lasttermclassical1}
\int_{Z_1^o}^{\infty}  \int_{0}^{\infty}& \frac{1}{(z+z_1)}  E_1(z, z_1) \Phi(z, z_1) |(g^j-g)(z_1, s)|g(z, s)dz dz_1 \nonumber\\
 \le & c\int_{Z_1^o}^{\infty} \int_{0}^{\infty} \frac{1}{(z+z_1)} (z^{\alpha} z_1^{\alpha'}+z^{\alpha'} z_1^{\alpha}) [(g^j+g)(z_1, s)] g(z, s)dz dz_1 \nonumber\\
 \le &  2c \|g\|_{-\sigma_1, \sigma_2} {Z_1^o}^{-1}  \int_{0}^{\infty}  z^{\sigma_2} [(g^j+g)(z_1, s)] dz_1\nonumber\\
  \le &  2c \|g\|_{-\sigma_1, \sigma_2} {Z_1^o}^{-1}   [\Gamma_2(T) +\|g\|_{\sigma_2} ].
\end{align}
Using a similar argument for choosing $Z^o$, $Z_1^o$ and $\epsilon(h) >0$ as discussed in Lemma \ref{4equicontinuiutywithspace}, we can easily show that the right-hand side of each integrals (\ref{Existence2}), (\ref{Existence4}), \eqref{Eleven3}, \eqref{Eleven4}, \eqref{Twelfth1} and \eqref{Lasttermclassical1} tend to zero as $\epsilon(h) \to 0$. Next, taking limit $j \to \infty$ in \eqref{Existence1}, it can easily be seen that all other difference terms tends to zero.\\

Finally, we obtain that the function $g$ is a solution to \eqref{cecbeclassi}--\eqref{4in1} written in the following integral form:
\begin{align}\label{Existence}
 g(z, t)=& g_0(z)+\int_0^t [\mathcal{C}(g)(z, t)-\mathcal{B}(g)(z, t)+\mathcal{B}^*(g)(z, t)]ds.
\end{align}
From above estimates and the continuity of $g$, we confirm that the right-hand to \eqref{cecbeclassi} is also continuous function on $\Xi$. Next,
taking differentiation of (\ref{Existence}) with respect to time variable $t$ and using Leibniz's rule, which ensures that $g$ is a continuous differentiable solution to \eqref{cecbeclassi}--\eqref{4in1} and from (\ref{Existence0}),  $g \in \Lambda^+_{-\sigma_1, \sigma_2}(T) \cap ( \Lambda^+_{-1/2, -\sigma_1}(T) \cup  \Lambda^+_{1, \sigma_2}(T))$.  This proves the existence of solutions of Theorem \ref{ClassicalExistence Thm}. In order to complete the proof of Theorem \ref{ClassicalExistence Thm}, we need to prove uniqueness of mass-conserving solution. It has been shown in the next section.
\end{proof}

\section{Uniqueness of classical solutions}

Let $g$ and $h$ in $\Lambda^+_{-\sigma_1, \sigma_2}(T) \cap ( \Lambda^+_{-1/2, -\sigma_1}(T) \cup  \Lambda^+_{1, \sigma_2}(T))$ be two solutions to \eqref{cecbeclassi}--\eqref{4in1} on $[0,T]$, where $T>0 $, with $ g(0)=h(0)$. Set $ H:=g-h$. We define $Q(t)$ as
\begin{eqnarray*}
Q(t):= \int_{0}^{\infty} (z^{-\sigma_1}+ z) |H(z,t)|dz.
\end{eqnarray*}
From the properties of the signum function, we get
\begin{eqnarray}\label{uni2 2}
Q(t) =\int_{0}^{\infty} (z^{-\sigma_1}+ z) \mbox{sgn}(H(z, t)) [g(z, t)-h(z, t)]dz,
\end{eqnarray}
where
\begin{align}\label{uni2 3}
 g(z, t)-h(z, t) =& \frac{1}{2}\int_{0}^{t}\int_{0}^{z}E(z-z_1, z_1)\Phi(z-z_1, z_1) [g(z-z_1,s)g(z_1,s)-h(z-z_1,s)h(z_1,s)]dz_1 ds\nonumber\\
 &- \int_{0}^{t}\int_{0}^{\infty} \Phi(z, z_1)[g(z, s)g(z_1, s)-h(z,s)h(z_1, s)]dz_1ds\nonumber\\
 &+ \frac{1}{2} \int_{0}^{t}\int_{z}^{\infty}\int_{0}^{z_1} \mbox{sgn(H(z, s))} P(z|z_1-z_2;z_2) E_1(z_1-z_2, z_2) \Phi(z_1-z_2, z_2)\nonumber\\
 &~~~~~~~~~~~\times [g(z_1-z_2,s)g(z_2,s)-h(z_1-z_2,s)h(z_2,s)] dz_2dz_1ds.
\end{align}
 Substituting (\ref{uni2 3}) into (\ref{uni2 2}), and using \eqref{binarybreakage} and repeated applications of Fubini's theorem, we simplify it further as
\begin{align}\label{uni2 4}
Q(t)=& \frac{1}{2} \int_{0}^{t} \int_{0}^{\infty} \int_{0}^{\infty} ((z+z_1)^{-\sigma_1}+ z+ z_1) \mbox{sgn}(H(z+z_1, s)) E(z, z_1)\Phi(z, z_1)\nonumber\\
 &~~~~~~~~~~~~~~~[g(z, s)g(z_1, s)-h(z, s)h(z_1, s)]dz dz_1 ds\nonumber\\
&- \int_{0}^{t} \int_{0}^{\infty} \int_{0}^{\infty} (z^{-\sigma_1}+ z) \mbox{sgn}(H(z, s))\Phi(z, z_1) [g(z, s)g(z_1, s)-h(z, s)h(z_1, s)]dz dz_1 ds\nonumber\\
 &+ \frac{1}{2} \int_{0}^{t} \int_0^{\infty} \int_{0}^{z_1}\int_{0}^{z_1} (z^{-\sigma_1}+ z) \mbox{sgn(H(z, s))} P(z|z_1-z_2;z_2) E_1(z_1-z_2, z_2) \Phi(z_1-z_2, z_2)\nonumber\\
 &~~~~~~~~~~~\times [g(z_1-z_2,s)g(z_2,s)-h(z_1-z_2,s)h(z_2,s)]dz dz_2dz_1ds\nonumber\\
  =& \frac{1}{2} \int_{0}^{t} \int_{0}^{\infty} \int_{0}^{\infty} [((z+z_1)^{-\sigma_1}+ z+ z_1) \mbox{sgn}(H(z+z_1, s)) - (z^{-\sigma_1}+ z) \mbox{sgn}(H(z, s))\nonumber\\& 
  - (z_1^{-\sigma_1}+ z_1)\mbox{sgn}(H(z_1, s)) ] E(z, z_1)    \Phi(z, z_1) [g(z, s)H(z_1, s)+h(z_1, s)H(z, s)]dz dz_1 ds\nonumber\\
 &+\frac{1}{2} \int_{0}^{t} \int_{0}^{\infty} \int_{0}^{\infty}  E_1(z_1, z_2) \bigg[ \frac{2}{(z_1+z_2)} \int_0^{z_1+z_2} (z^{-\sigma_1}+ z)\mbox{sgn}(H(z, s)) dz\nonumber\\ & -(z_1^{-\sigma_1}+z_1) \mbox{sgn}(H(z_1, s))
 -(z_2^{-\sigma_1}+z_2)\mbox{sgn}(H(z_2, s)) \bigg]  \Phi(z_1, z_2)\nonumber\\
 &~~~~~~~~\times [g(z_1, s)H(z_2, s)+h(z_2, s)H(z_1, s)]dz_2 dz_1 ds,
\end{align}
where
\begin{eqnarray*}\label{uni2 41}
g(z, s)g(z_1, s)-h(z, s)h(z_1, s)= g(z, s)H(z_1, s)+h(z_1, s)H(z, s).
\end{eqnarray*}

Now, let us define $A_1$ and $A_2$ by
\begin{align*}
A_1(z, z_1, t):=&((z+z_1)^{-\sigma_1}+ z+ z_1) \mbox{sgn}(H(z+z_1, t)) - (z^{-\sigma_1}+ z) \mbox{sgn}(H(z, t)) \\
 &- (z_1^{-\sigma_1}+ z_1)\mbox{sgn}(H(z_1, t))
\end{align*}
and
\begin{align*}
A_2(z_1, z_2, t):=&\frac{2}{(z_1+z_2)} \int_0^{z_1+z_2} (z^{-\sigma_1}+z) \mbox{sgn}(H(z, t)) dz -(z_1^{-\sigma_1}+z_1) \mbox{sgn}(H(z_1, t))\\
&-(z_2^{-\sigma_1}+z_2) \mbox{sgn}(H(z_2, t)).
\end{align*}
Substituting $A_1(z, z_1, t)$ and $A_2(z_1, z_2, t)$ into (\ref{uni2 4}) and then using (\ref{mass1}) $\&$ \eqref{binarybreakage}, we obtain
\begin{align}\label{sum}
Q(t)=& \frac{1}{2} \int_{0}^{t} \int_{0}^{\infty} \int_{0}^{\infty} A_1(z, z_1, s) E(z, z_1) \Phi(z, z_1)  g(z, s)H(z_1, s) dz dz_1 ds\nonumber\\
 &+ \frac{1}{2} \int_{0}^{t} \int_{0}^{\infty} \int_{0}^{\infty}  A_1(z, z_1, s) E(z, z_1) \Phi(z, z_1)  h(z_1, s)H(z, s) dz dz_1 ds\nonumber\\
 &+ \frac{1}{2} \int_{0}^{t} \int_0^{\infty} \int_{0}^{\infty} A_2(z_1, z_2, s) E_1(z_1, z_2)  \Phi(z_1, z_2)  g(z_1, s) H(z_2, s) dz_1dz_2ds\nonumber\\
 &+ \frac{1}{2} \int_{0}^{t} \int_0^{\infty} \int_{0}^{\infty} A_2(z_1, z_2, s) E_1(z_1, z_2) \Phi(z_1, z_2)h(z_2, s) H(z_1, s) dz_1dz_2ds\nonumber\\
 =:& \sum_{i=1}^{4} S_{i}(t),
\end{align}
where $S_{i}(t)$, for $i=1,2,3, 4,$ are the corresponding integrals in (\ref{sum}).\\

 Let us consider the following two estimates on $A_1$
\begin{align}\label{BoundA1}
A_1(z, z_1, t)&H(z_1, t)\nonumber\\=& [((z+z_1)^{-\sigma_1}+z+z_1) \mbox{sgn}(H(z+z_1, t)) -(z^{-\sigma_1}+z) \mbox{sgn}(H(z, t))\nonumber\\ &-(z_1^{-\sigma_1}+z_1)\mbox{sgn}(H(z_1, t))]H(z_1, t)\nonumber\\
\le & [z^{-\sigma_1}+z_1^{-\sigma_1}+z+z_1 +z^{-\sigma_1}+z-z_1^{-\sigma_1}-z_1] |H(z_1, t)| \nonumber\\
 \le & 2(z^{-\sigma_1}+z) |H(z_1, t)|.
\end{align}
and similarly, we have
\begin{align}\label{BoundA11}
A_1(z, z, t)H(z, t) \le 2(z_1^{-\sigma_1}+z_1) |H(z, t)|.
\end{align}
Next, we consider the estimates on $A_2$ as
\begin{align}\label{BoundA2}
A_2(z_1, z_2, t)H(z_2, t)=& \bigg[\frac{2}{(z_1+z_2)} \int_0^{z_1+z_2} (z^{-\sigma_1}+z) \mbox{sgn}(H(z, t)) dz -(z_1^{-\sigma_1}+z_1) \mbox{sgn}(H(z_1, t))\nonumber\\
&~~~~~~~~~~~~ -(z_2^{-\sigma_1}+z_2) \mbox{sgn}(H(z_2, t)) \bigg] H(z_2, t)\nonumber\\
\le & \bigg[\frac{2}{(z_1+z_2)}  \bigg[\frac{(z_1+z_2)^{1-\sigma_1}}{1-\sigma_1}+ \frac{(z_1+z_2)^2}{2} \bigg]+z_1^{-\sigma_1}+z_1\nonumber\\
&~~~~~~~~~~~~ -z_2^{-\sigma_1}-z_2 \bigg]  |H(z_2, t)|\nonumber\\
\le &   \bigg[\frac{2(z_1+z_2)^{-\sigma_1}}{1-\sigma_1}+ (z_1+z_2) +z_1^{-\sigma_1}+z_1 -z_2^{-\sigma_1}-z_2 \bigg]  |H(z_2, t)|\nonumber\\
\le &   \bigg[\frac{2 z_1^{-\sigma_1}}{1-\sigma_1}+ z_1 +z_1^{-\sigma_1}+z_1  \bigg]  |H(z_2, t)| \le \frac{3}{1-\sigma_1} (z_1^{-\sigma_1}+z_1)|H(z_2, t)|.
\end{align}
Similarly, we get
\begin{align}\label{BoundA21}
A_2(z_1, z_2, t)H(z_1, t) \le \frac{3}{1-\sigma_1} (z_2^{-\sigma_1}+z_2) |H(z_1, t)|.
\end{align}

Now, we evaluate $S_1(t)$, by using \eqref{BoundA1} and $(\alpha_2)$, as
\begin{align*}
S_1(t) \le & c\int_{0}^{t} \int_{0}^{\infty} \int_{0}^{\infty} (z^{-\sigma_1}+z) [z^{\alpha}z_1^{\alpha'}+ z^{\alpha'}z_1^{\alpha}] |H(z_1, s)|  g(z, s) dz dz_1 ds\nonumber\\
\le & c\int_{0}^{t} \int_{0}^{\infty}  \bigg[ \int_{0}^{1} (z^{-\sigma_1}+z) [z^{\alpha}z_1^{\alpha'}+ z^{\alpha'}z_1^{\alpha}] g(z, s) dz\\
 &+ \int_{1}^{\infty} (z^{-\sigma_1}+z) [z^{\alpha}z_1^{\alpha'}+ z^{\alpha'}z_1^{\alpha}] g(z, s) dz \bigg]  |H(z_1, s)|   dz_1 ds\nonumber\\
 \le & 2c \|g\|_{-\sigma_1, \sigma_2} \int_{0}^{t} \bigg[ \int_{0}^{1} (z_1^{\alpha'}+ z_1^{\alpha})   |H(z_1, s)|   dz_1+ \int_{1}^{\infty} (z_1^{\alpha'}+ z_1^{\alpha})   |H(z_1, s)|   dz_1 \bigg] ds \nonumber\\
  \le & 4c \|g\|_{-\sigma_1, \sigma_2} \int_{0}^{t} \int_{0}^{1} (z_1^{-\sigma_1}+z_1)  |H(z_1, s)|   dz_1 ds \le 4c \|g\|_{-\sigma_1, \sigma_2} \int_{0}^{t} Q(s)ds.
\end{align*}

 Similarly, by using \eqref{BoundA11}, $(\alpha_2)$, we estimate $S_2(t)$, as
\begin{align*}
S_2(t)\le  4c  \|h\|_{-\sigma_1, \sigma_2} \int_{0}^{t} Q(s)ds.
\end{align*}

Further, we estimate $S_3(t)$, by using $(\alpha_2)$ and \eqref{BoundA2}, as
\begin{align*}
S_3(t)\le & c \frac{3}{1-\sigma_1} \int_{0}^{t} \int_0^{\infty} \int_{0}^{\infty}(z_1^{-\sigma_1}+z_1)  [z_1^{\alpha}z_2^{\alpha'}+ z_1^{\alpha'}z_2^{\alpha}] |H(z_2, s)|    g(z_1, s)  dz_1dz_2ds\nonumber\\
\le & c \frac{3}{1-\sigma_1} \int_{0}^{t} \int_0^{\infty}  \bigg[ \int_{0}^{1}(z_1^{-\sigma_1}+z_1)  [z_1^{\alpha}z_2^{\alpha'}+ z_1^{\alpha'}z_2^{\alpha}]     g(z_1, s)  dz_1 \\
& +\int_{1}^{\infty}(z_1^{-\sigma_1}+z_1)  [z_1^{\alpha}z_2^{\alpha'}+ z_1^{\alpha'}z_2^{\alpha}]     g(z_1, s)  dz_1 \bigg] |H(z_2, s)| dz_2ds\nonumber\\
\le &   \frac{6c}{1-\sigma_1} \|g\|_{-\sigma_1, \sigma_2} \int_{0}^{t} \bigg[ \int_0^{1}  [z_2^{\alpha'}+ z_2^{\alpha}]   |H(z_2, s)| dz_2+\int_1^{\infty}  [z_2^{\alpha'}+ z_2^{\alpha}]   |H(z_2, s)| dz_2 \bigg] ds\nonumber\\
\le &   \frac{12c}{1-\sigma_1} \|g\|_{-\sigma_1, \sigma_2} \int_{0}^{t} Q(s) ds.
 \end{align*}

Finally, $S_4(t)$ can be evaluated, by using $(\alpha_2)$ and \eqref{BoundA21}, as
\begin{align*}
S_4(t)\le  \frac{12c}{1-\sigma_1} \|h\|_{-\sigma_1, \sigma_2} \int_{0}^{t} Q(s)  ds.
\end{align*}

Inserting the estimates on $S_1(t)$, $S_2(t)$, $S_3(t)$ and $S_4(t)$ into (\ref{sum}), we obtain
\begin{align*}
Q(t) \leq  \Theta \int_{0}^{t} Q(s) ds,
\end{align*}
where $\Theta :=  \bigg(4c+\frac{12c}{1-\sigma_1} \bigg)(\|g\|_{-\sigma_1, \sigma_2} +\|h\|_{-\sigma_1, \sigma_2} )$. Then by Gronwall's inequality, we have
\begin{align*}
Q(t) \leq  0 \times \exp(\Theta T) = 0.
\end{align*}
Therefore, $g(z, t)=h(z, t)$ a.e. for each $z \in [0, \infty)$ which confirms the uniqueness of solutions to \eqref{cecbeclassi}--\eqref{4in1}.\\

Finally, we prove $g$ in $\Lambda^+_{-\sigma_1, \sigma_2}(T) \cap ( \Lambda^+_{-1/2, -\sigma_1}(T) \cup  \Lambda^+_{1, \sigma_2}(T))$ is a mass conserving solution .\\

In order to show that $g$ is indeed a mass conserving solution to \eqref{cecbeclassi}--\eqref{4in1}, it is sufficient to show that $\mathcal{M}_1(t)=\mathcal{M}_1(0)$ for all $t \in (0, T]$. Multiplying \eqref{cecbeclassi} by $z$ and taking integration between $0$ to $\infty$ with respect to $z$, and  applying $(\alpha_2)$, \eqref{binarybreakage}, the norm of $g$ in $\Lambda_{-\sigma_1, \sigma_2}^{+}(T)$, one can easily be verify that
\begin{eqnarray*}
\frac{d \mathcal{M}_1(t)}{dt}=0\ \ \forall \ t \in (0, T].
\end{eqnarray*}
On integration yield with respect to time from $0$ to $t$, we have
\begin{eqnarray*}
\mathcal{M}_1(t) =\mathcal{M}_1(0)\ \ \forall \ t \in (0, T].
\end{eqnarray*}
This completes the proof of the Theorem \ref{ClassicalExistence Thm}.

\subsection*{Acknowledgment}
The first author, PKB would like to thank University Grant Commission (UGC), $6405/11/44$, India, for assisting Ph.D fellowship and the second author, AKG wish to thank Science and Engineering Research Board (SERB), Department of Science and Technology (DST), India for their funding support through the project $YSS/2015/001306$ for completing this work.

\end{document}